\documentclass[11pt, oneside, reqno]{amsart}
\usepackage{amsfonts, amssymb, color}
\usepackage{amsmath}
\usepackage{graphicx}
\usepackage{amscd}
\usepackage{amsthm}
\usepackage{tikz}
\usetikzlibrary{decorations.markings}
\usetikzlibrary{shapes,arrows,positioning}	
\usetikzlibrary {arrows.meta}	
\usetikzlibrary{decorations.markings,decorations.pathmorphing}

\newcommand{\rmidarrow}{\tikz \draw[thick,-{Computer Modern Rightarrow}] (-1pt,0) --(1pt,0);}	
\newcommand{\lmidarrow}{\tikz \draw[thick, -{Computer Modern Rightarrow}] (1pt,0) -- (-1pt,0);}

\usepackage[margin=1in]{geometry}

\newcommand{\mystar}{\operatornamewithlimits{\ast}}

\begin{document}

\newtheorem{thm}{Theorem}[section]
\newtheorem{lem}[thm]{Lemma}
\newtheorem{prop}[thm]{Proposition}
\newtheorem{cor}[thm]{Corollary}
\newtheorem{obs}[thm]{Observation}
\newtheorem{rem}[thm]{Remark}
\newtheorem{Def}[thm]{Definition}
\newtheorem{ex}[thm]{Example}

\title{Rigidity on Quantum Symmetry for a Certain  Class of Graph C*-algebras}
\author{Ujjal Karmakar \MakeLowercase{and} Arnab Mandal }
\maketitle

\begin{abstract}
	Quantum symmetry of graph $C^{*}$-algebras has been studied, under the consideration of different formulations, in the past few years. It is already known that the compact quantum group  $(\underbrace{C(S^{1})*C(S^{1})*\cdots *C(S^{1})}_{|E(\Gamma)|-times}, \Delta) $ [in short, $\left( \mystar\limits_{|E(\Gamma)|} C(S^1),  \Delta\right)$] always acts on a graph $C^*$-algebra for a finite, connected, directed graph  $\Gamma$ in the category introduced by Joardar and Mandal, where $|E(\Gamma)|:=$ number of edges in $\Gamma$. In this article, we show that for a certain class of graphs including Toeplitz algebra, quantum odd sphere, matrix algebra etc.  the  quantum symmetry of their associated graph $C^*$-algebras remains $\left( \mystar\limits_{|E(\Gamma)|} C(S^1),  \Delta\right) $ in the category as mentioned before. More precisely, if a finite, connected, directed graph $\Gamma$ satisfies the following graph theoretic properties : (i) there does not exist any cycle of length $\geq$ 2 (ii) there exists a path of length $(|V(\Gamma)|-1)$ which consists all the vertices, where $|V(\Gamma)|:=$ number of vertices in $\Gamma$ (iii) given any two vertices (may not be distinct) there exists at most one edge joining them, then the universal object coincides with $\left( \mystar\limits_{|E(\Gamma)|} C(S^1),  \Delta\right) $. Furthermore, we have pointed out a few counter examples whenever the above assumptions are violated. 	     
\end{abstract}

\maketitle

\noindent \textbf{Keywords:} Graph $C^*$-algebra, Compact quantum group, Quantum Symmetry.\\ 
\textbf{AMS Subject classification:} 	46L89, 58B32, 46L09.

\section{Introduction}
In 1980, Cuntz and Krieger studied a family of $C^{*}$-algebras called Cuntz-Krieger algebras which are a rich supplier of examples for operator algebraists (refer to \cite{CK}). It can be treated as a generalized version of the Cuntz algebra which was discovered by Cuntz in \cite{Cuntz} considered as the first explicit example of a separable, simple, infinite $C^{*}$-algebra. A Graph $C^*$-algebra, a generalization of a Cuntz-Krieger algebra, is a universal $C^*$-algebra generated by some orthogonal projections and partial isometries coming from a given directed graph. We can capture a large class of examples of $C^*$-algebra as a graph $C^*$-algebra including matrix algebras, Toeplitz algebra, quantum spheres (odd and even), quantum projective space  etc. More interestingly, some operator algebraic properties of $C^*$-algebras can be recovered from the underlying graph and vice-versa. For instance, a graph $C^*$-algebra is unital (AF algebra) iff the underlying graph has only finitely many vertices (contains no cycle) (consult \cite{Kumjian}). It should be mentioned that  a graph $C^*$-algebra can be considered as a Cuntz-Krieger algebra if the underlying finite graph is highly connected (i.e. graph without sink).\\

On the other hand, groups founded in mathematics around the 19th century, as a collection of symmetries of an object. But the compact quantum group (in short, CQG) appeared in mathematics almost a hundred years after the appearance of the group by S.L. Woronowicz in \cite{Wor}. Also, some initial examples were constructed by him  using analytic construction. In noncommutative geometry, Mathematicians always wanted to find a right notion of symmetry for the noncommutative spaces. In 1995, Alain Connes raised the question to find an actual notion of quantum automorphism group for a  noncommutative space (see \cite{connes}). The core idea was to make a generalization of classical group symmetry to a `noncommutative version of symmetry'.  In 1998, Shuzhou Wang formulated the notion of a quantum automorphism group for a finite space $X_{n}$ containing $n$ points in \cite{Wang}. This problem was investigated in categorical language by him. It is known that the automorphism group of a space $X$, denoted by $Aut(X)$, can be realized as the universal object of the category whose objects are the faithful group actions $(G, \alpha)$ on that space $X$ and a morphism between two group actions is basically a group homomorphism between the underlying groups that respects the actions. Now in the quantum scenario, one just needs to replace the objects of the category by compact quantum groups and morphisms by compact quantum group morphisms. Though in the classical case, the universal object always does exist, in a quantum setting it may not be so.  So the first challenge is to show the existence of a universal object and the next challenge is to compute the exact quantum symmetry with respect to the respective category. For a $n$-set $X_n$, the function algebra $C(X_{n})$ is  isomorphic to $\mathbb{C}^n$ as a $C^*$-algebra. Wang has shown that though its classical automorphism group is $S_n$, the underlying $C^*$-algebra of the quantum automorphism group is non-commutative and infinite dimensional which is remarkably larger than $S_n$ for $n>3$ (see \cite{Wang}). Moreover, he also classified the quantum automorphism groups for any finite dimensional $C^*$-algebras but the problem was that the universal object fails to exist for other finite dimensional cases except $\mathbb{C}^n$ in the larger category. This issue was resolved by considering a subcategory of that large category via imposing a `volume' preserving condition (preserving a faithful state) on a suitable subspace and showed under this restricted set-up universal object exists on that subcategory. Later, T. Banica, J. Bichon extended the quantum symmetry structure on a finite graph  (see \cite{Ban}, \cite{Bichon}) and the notion of quantum isometry group (infinite dimensional set-up) was formulated by Goswami in \cite{Laplace}. A few years later, adopting the key ideas from their works, T. Banica and A. Skalski proposed the notion of orthogonal filtration on a $C^*$-algebra, equipped with a faithful state, and its quantum symmetry (refer to \cite{Ortho}). \\

The interesting fact is that though the function algebra over a finite, directed graph is finite dimensional, its associated graph $C^*$-algebra may indeed be infinite dimensional. So, it is natural to ask about the quantum symmetry of a graph $C^*$-algebra. S. Schmidt and M. Weber started the programme on it in an algebraic framework in \cite{Web}, whereas S. Joardar and A. Mandal have studied the quantum symmetry of a graph $C^*$-algebra in a more analytic framework. In \cite{Mandal}, the authors showed the existence of the universal object in their category and remarkably the quantum symmetry group of the graph $C^*$-algebra is strictly larger than the quantum automorphism group of the underlying graph in the sense of Banica. Moreover, they have shown that the CQG  $\left( \mystar\limits_{|E(\Gamma)|} C(S^1),  \Delta\right)$ always acts faithfully on any graph $C^*$-algebra for a finite, connected, directed graph. But it is natural to ask: when will it be the `largest' CQG acting faithfully on a graph $C^*$-algebra? It was already shown that the quantum symmetry is exactly $\left( \mystar\limits_{|E(\Gamma)|} C(S^1),  \Delta\right)$ for a simple directed path by Theorem 5.1 of \cite{Mandal}. Moreover,  Theorem 5.4 and  Proposition 5.9 of \cite{Mandal} tell us that the quantum symmetries are doubling of $C(S^{1})*C(S^{1})$ and $U_{n}^{+}$ for a complete graph with two vertices and Cuntz algebra (with n loops) respectively under the consideration of their category. Therefore, clearly one can't always expect the trivial quantum symmetry $\left( \mystar\limits_{|E(\Gamma)|} C(S^1),  \Delta\right)$ in general. In this article,  we are interested to find a certain class of graphs where the quantum symmetry coincides with  $\left( \mystar\limits_{|E(\Gamma)|} C(S^1),  \Delta\right)$ associated to their graph $C^*$-algebras. The stimulating fact is that this class covers several well known examples of $C^*$-algebras like $ C(S^1) $, Toeplitz algebra, odd quantum spheres, $ SU_{q}(2) $, even dimensional quantum balls etc. \\

Now, we briefly sketch the presentation of this article as follows: In the 2nd section, some basic facts are recalled about a directed graph, graph $C^*$-algebras, compact quantum groups and their action to a $C^*$-algebra, quantum automorphism groups. Moreover, we recall the quantum symmetry of a graph $C^*$-algebra with some definitions and proven results from \cite{Mandal}. In the 3rd section, we describe the class of graphs for which the associated graph $C^*$-algebra has $\left( \mystar\limits_{|E(\Gamma)|} C(S^1),  \Delta\right)$ as the universal object in the same category introduced in \cite{Mandal} regarding our context. Also, those graphs are characterized in terms of the adjacency matrix. In the 4th section, we prove the main theorem (Theorem \ref{T1}) by breaking this into two cases namely Proposition \ref{P1} and Proposition \ref{P2} with the help of some lemmas. In the last section, we provide some counter examples which tell us that if we deviate slightly from our desired class of graphs, the result may not hold.

\section{Preliminaries}

\subsection{Notations and conventions}
For a set $X$,  $|X|$ will denote the cardinality of $X$ and $ id_{X} $ will denote the identity function on $X$. For a $C^*$-algebra $ \mathcal{B} $, $\mathcal{B}^*$  is the set of all linear bounded functionals on $ \mathcal{B} $. For a set $X$, span($X$) will denote the linear space spanned by the elements of $X$. The tensor product `$\otimes$' denotes the spatial or minimal tensor product between two $C^*$-algebras.\\
For us, all the $C^*$-algebras are unital.

\subsection{Graph $C^{*}$-algebras}
In this subsection, we will recall some basic facts about graph $C^{*}$-algebra from \cite{BPRS, BHRS, Laca,  KPRR, Pask, Raeburn}.\\
A directed graph $\Gamma=\{ V(\Gamma), E(\Gamma),s,r \}$ consists of countable sets $ V(\Gamma) $ of vertices and $ E(\Gamma)$ of edges together with the maps $s,r: E(\Gamma) \to V(\Gamma) $ describing the source and range of the edges. We say $v \in V(\Gamma)$ is \textbf{adjacent to} $w \in V(\Gamma)$ (denoted by $v \to w $)  if there exists an edge $e \in E(\Gamma)$ such that  $ v=s(e)$ and $ w=r(e) $. A graph is said to be \textbf{finite} if both $|V(\Gamma)|$ and $|E(\Gamma)|$ are finite. Throughout this article, a directed graph is said to be \textbf{connected} if for every vertex $v \in V(\Gamma) $ at least one of $s^{-1}(v)$ or $r^{-1}(v)$ is nonempty. A \textbf{path} $\alpha$ of length $n$ in a directed graph $\Gamma$ is a sequence $\alpha=e_{1}e_{2} \cdots e_{n} $ of edges in $\Gamma$ such that $ r(e_{i}) = s(e_{i+1}) $ for $1 \leq i \leq (n-1) $. $s(\alpha):=s(e_{1})$ and $r(\alpha):=r(e_{n})$. A path of length $n$ in a directed graph $\Gamma$ is said to be a \textbf{cycle} of length $n$ if $s(\alpha)=r(\alpha)$ and $ s(e_i) \neq s(e_j)$ for $ i \neq j $. A \textbf{loop} is a cycle of length 1.\\
Let $\Gamma=\{V(\Gamma),E(\Gamma),s,r \}$ be a finite, directed graph with $|V(\Gamma)|=n$. The adjacency matrix of $\Gamma$ with respect to the ordering of the vertices $ (v_{1},v_{2},..., v_{n}) $ is a matrix $ (a_{ij})_{i,j= 1,2,...,n} $ with 
$ a_{ij} =
\begin{cases}
n(v_{i},v_{j}) & if ~~ v_{i} \to v_{j} \\
0 &  ~~ otherwise 
\end{cases} $ where $ n(v_{i},v_{j})$ denotes the number of edges joining $v_{i}$ to $v_{j}$.\\ 

\noindent In this article, we will define graph $C^*$-algebra only for a finite, directed graph.  
\begin{Def}
	Given a finite, directed graph $\Gamma$, the graph $C^{*}$-algebra $C^{*}(\Gamma)$ is a universal $C^{*}$-algebra generated by partial isometries $ \{S_{e}: e \in E(\Gamma) \} $ and orthogonal projections $ \{p_{v}: v \in V(\Gamma) \}$ such that \vspace{0.1cm}
	\begin{itemize}
		\item[(i)] $S_{e}^{*}S_{e}=p_{r(e)}$ for all $ e \in E(\Gamma) $. \vspace{0.1cm}
		\item[(ii)] $p_{v}=\sum\limits_{\{f:s(f)=v\}}S_{f}S_{f}^{*}$ for all $ v \in V(\Gamma) $ such that $s^{-1}(v) \neq \emptyset $.\\
	\end{itemize}
\end{Def} 

For any graph $C^{*}$-algebra, we have the following interesting results.
\begin{prop}
	For a finite, directed graph $\Gamma=\{V(\Gamma),E(\Gamma),s,r \}$
	\begin{itemize}
		\item[(i)] $S_{e}^{*}S_{f}=0$  for all $e \neq f $.\vspace{0.1cm}
		\item[(ii)] $\sum\limits_{v \in V(\Gamma)}p_{v}=1$.\vspace{0.1cm}
		\item[(iii)] $ S_{e}S_{f}\neq 0 \Leftrightarrow r(e)=s(f)$ i.e. $ef \text{ is a path} $.     \\
		Moreover, $S_{\gamma}:=S_{e_{1}}S_{e_{2}}...S_{e_{k}} \neq 0 \Leftrightarrow r(e_{i})=s(e_{i+1}) $ for $i=1,2,...,(k-1)$ i.e. $ \gamma=e_{1}e_{2}...e_{k}$ is a path.\vspace{0.1cm}
		\item[(iv)] $ S_{e}S_{f}^{*}\neq 0 \Leftrightarrow r(e)=r(f). $\vspace{0.1cm}
		\item[(v)] $ span\{S_{\gamma}S_{\mu}^{*} : \gamma, \mu \in E^{< \infty}(\Gamma) \text{ with } r(\gamma)=r(\mu)\} $ is dense in $C^{*}(\Gamma)$ where $E^{< \infty}(\Gamma)$ denotes the set of all finite length paths.\\
	\end{itemize} 
\end{prop}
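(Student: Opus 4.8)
The proof rests on two facts used over and over: the partial-isometry identities $S_eS_e^*S_e=S_e$ (equivalently $S_ep_{r(e)}=S_e$ and $p_{r(e)}S_e^*=S_e^*$), and the hypothesis that the $p_v$ are \emph{mutually} orthogonal projections, i.e.\ $p_vp_w=0$ for $v\neq w$. I will also use that in the universal algebra $C^*(\Gamma)$ no generator vanishes, which is standard (see \cite{Raeburn}) and is needed only for the ``$\neq0$'' directions.

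I would first prove (i). The key sublemma is that $S_eS_e^*$ and $S_fS_f^*$ are orthogonal projections whenever $e\neq f$: if $s(e)\neq s(f)$ this follows because $S_eS_e^*\leq p_{s(e)}$, $S_fS_f^*\leq p_{s(f)}$ and $p_{s(e)}p_{s(f)}=0$; if $s(e)=s(f)=v$, then $s^{-1}(v)\neq\emptyset$, so relation (ii) writes $p_v$ as a finite sum of projections, and $S_eS_e^*+S_fS_f^*\leq p_v\leq 1$ forces $S_eS_e^*S_fS_f^*=0$. Then $S_e^*S_f=(S_e^*S_eS_e^*)(S_fS_f^*S_f)=S_e^*(S_eS_e^*)(S_fS_f^*)S_f=0$. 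For (ii), put $P=\sum_{v}p_v$, a projection by (i). Relation (ii) together with (i) gives $p_{s(e)}S_e=S_e$ and $S_ep_{r(e)}=S_e$, hence $p_wS_e=p_wp_{s(e)}S_e=0$ for $w\neq s(e)$ and $S_ep_w=0$ for $w\neq r(e)$; therefore $PS_e=S_e=S_eP$ and $Pp_v=p_v=p_vP$, so $P$ fixes every generator two-sidedly. Since the generators generate $C^*(\Gamma)$, the norm-closed $*$-subalgebra $\{a:Pa=a=aP\}$ is all of $C^*(\Gamma)$, so $P$ is a unit, i.e.\ $\sum_{v}p_v=1$.

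Parts (iii) and (iv) are short computations of the same type. For (iii): $(S_eS_f)^*(S_eS_f)=S_f^*p_{r(e)}S_f$, which equals $S_f^*p_{s(f)}S_f=p_{r(f)}\neq0$ when $r(e)=s(f)$, while if $r(e)\neq s(f)$ then $S_eS_f=S_ep_{r(e)}p_{s(f)}S_f=0$; the statement for a general word $S_{e_1}\cdots S_{e_k}$ then follows by induction on $k$, computing $(S_{e_1}\cdots S_{e_k})^*(S_{e_1}\cdots S_{e_k})$ and collapsing $p_{r(e_i)}S_{e_{i+1}}=S_{e_{i+1}}$ at each stage. For (iv), similarly $(S_eS_f^*)^*(S_eS_f^*)=S_fp_{r(e)}S_f^*$, equal to $S_fS_f^*\neq0$ if $r(e)=r(f)$ and to $0$ otherwise because then $S_eS_f^*=S_ep_{r(e)}p_{r(f)}S_f^*=0$.

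For (v), I would show that $\mathcal A:=\operatorname{span}\{S_\gamma S_\mu^*:\gamma,\mu\in E^{<\infty}(\Gamma),\ r(\gamma)=r(\mu)\}$ (allowing length-zero paths, with $S_v:=p_v$) is a $*$-subalgebra of $C^*(\Gamma)$ containing every generator: it is $*$-closed since $(S_\gamma S_\mu^*)^*=S_\mu S_\gamma^*$, and it contains $p_v=S_vS_v^*$ and $S_e=S_eS_{r(e)}^*$. The only point needing care is multiplicativity: to evaluate $(S_\gamma S_\mu^*)(S_\alpha S_\beta^*)=S_\gamma(S_\mu^*S_\alpha)S_\beta^*$ one must compute $S_\mu^*S_\alpha$, and by (i) this is zero unless one of $\mu,\alpha$ is an initial subpath of the other, in which case it collapses (using $S_\nu^*S_\nu=p_{r(\nu)}$) to $S_{\mu'}^*p_{r(\alpha)}$ or $p_{r(\mu)}S_{\alpha'}$; absorbing these projections back into the surrounding partial isometries puts the product in $\mathcal A$. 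Taking closures, $\overline{\mathcal A}$ is a $C^*$-subalgebra of $C^*(\Gamma)$ containing all generators, hence equals it. The main bookkeeping obstacle is precisely this prefix case-analysis in (v), together with pinning down the step ``$P$ is the unit'' in (ii); everything else is routine manipulation with the two identities above.
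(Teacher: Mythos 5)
The paper offers no proof of this proposition: it is stated in the preliminaries as a recalled standard fact from \cite{Pask} and \cite{Raeburn}, so there is no in-paper argument to compare against. Your proof is the standard textbook argument and it is sound: the mutual orthogonality of the range projections $S_eS_e^*$ (split into the cases $s(e)\neq s(f)$, using orthogonality of the $p_v$, and $s(e)=s(f)$, using that two projections with sum $\leq 1$ are orthogonal), the identification of $\sum_v p_v$ as a two-sided unit on a generating set, the $C^*$-identity computations for (iii) and (iv), and the prefix case-analysis showing the span in (v) is a $*$-subalgebra containing all generators are exactly the expected steps. The only external input is the nonvanishing of the generators in the universal algebra, which every ``$\neq 0$'' direction genuinely needs; you flag this correctly, and a fully self-contained treatment would additionally require exhibiting one representation (e.g.\ on the path space of $\Gamma$) in which all $p_v$ are nonzero.
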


%%%%%%%%%%%%%%%%%%%%%%%%%%%%%%%%%%%%%%%%%%%%%%%%%%%%%%%%%%%%%%%%%%%%%%%%%

\subsection{Compact quantum groups and quantum automorphism groups } \label{sec2.3}
In this subsection, we will recall some important facts related to compact quantum groups and their actions on a given $C^{*}$-algebra. We refer the readers to \cite{Van, Wang, Wor, Tim, Nesh} for more details.
\begin{Def}
	A compact quantum group(CQG) is a pair $(\mathcal{Q}, \Delta ) $, where $\mathcal{Q}$ is a unital $C^{*}$-algebra and $ \Delta : \mathcal{Q} \to \mathcal{Q} \otimes \mathcal{Q} $  is a unital $C^{*}$-homomorphism such that \vspace{0.1cm}
	\begin{itemize}
		\item[(i)] $(id_{\mathcal{Q}} \otimes \Delta)\Delta = (\Delta \otimes id_{\mathcal{Q}})\Delta $. \vspace{0.1cm}
		\item[(ii)] $ span\{ \Delta(\mathcal{Q})(1 \otimes \mathcal{Q} )\}$ and $ span\{\Delta(\mathcal{Q})(\mathcal{Q} \otimes 1)\} $ are dense in $(\mathcal{Q}\otimes \mathcal{Q})$. \\
	\end{itemize}
	
	Given two compact quantum groups $(\mathcal{Q}_{1},\Delta_{1})$ and $(\mathcal{Q}_{2},\Delta_{2})$, a compact quantum group morphism (CQG morphism) between $\mathcal{Q}_{1}$ and $\mathcal{Q}_{2}$ is a $C^{*}$-homomorphism $ \phi: \mathcal{Q}_{1} \to \mathcal{Q}_{2} $ such that $ (\phi \otimes \phi)\Delta_{1}=\Delta_{2}\phi $ .
\end{Def}

For any CQG $\mathcal{Q}$, there exists a canonical Hopf $*$-algebra $\mathcal{Q}_{0} \subseteq  \mathcal{Q} $ which is dense in $\mathcal{Q}$. Moreover, one can define an antipode $\kappa$ and a counit $\epsilon$ on the dense  Hopf $*$-algebra $\mathcal{Q}_{0}$.\\

\noindent Examples:
\begin{enumerate}
	\item Let $C(U_{n}^{+})$ be the universal $C^{*}$-algebra generated by $\{q_{ij}: i,j \in \{1,2,...,n\}\}$ such that $ U:=(q_{ij})_{n\times n} $ and $U^{t}$ both are unitary. Now, define $ \Delta : C(U_{n}^{+}) \to C(U_{n}^{+}) \otimes C(U_{n}^{+}) $ by $\Delta(q_{ij})=\sum_{k=1}^{n} q_{ik}\otimes q_{kj} $. Then $(U_{n}^{+},\Delta)$ denotes the CQG whose underlying $C^{*}$-algebra is $C(U_{n}^{+})$. (see \cite{Wangfree})\\
	\item For  $F \in \mathbb{GL}_{n}\mathbb{(C)}$, $ A_{U^{t}}(F)$ be the universal $C^{*}$-algebra generated by  $\{q_{ij}: i,j \in \{1,2,...,n\}\}$ such that \vspace{0.1cm}
	\begin{itemize}
		\item $U^{t}$ is unitary. \vspace{0.1cm}
		\item $UF^{-1}U^{*}F=F^{-1}U^{*}FU=Id_{n \times n}$. \vspace{0.1cm} 
	\end{itemize}
	Again coproduct is given on generators $\{q_{ij}\}_{i,j=1,2,...,n}$ by $\Delta(q_{ij})=\sum_{k=1}^{n} q_{ik}\otimes q_{kj} $. One can show that $(A_{U^{t}}(F),\Delta)$ is a CQG. Observe that $(A_{U^{t}}(Id_{n \times n}),\Delta)=(U_{n}^{+},\Delta)$.  (consult \cite{Van} for details)\\
	
	%      Being a compact matrix pseudogroup,$ A_{U^{t}}(F) $ admits a coinverse $\kappa : A_{U^{t}}(F) \to %A_{U^{t}}(F)$  which satisfies the relations $\kappa(q_{kl})=q^{-1}_{kl}$ where $q^{-1}_{kl}$ denotes %the matrix elements of $U^{-1}$
	
	\item The commutative $C^{*}$-algebra $C(S^{1})$ can be thought of as a universal $C^{*}$-algebra generated by a unitary element $z$ i.e. 
	$C(S^{1})=\{z~|~zz^{*}=z^{*}z=1\}$. Now, define a coproduct on generator $z$ by $\Delta(z)=z \otimes z$. It is easy to check from the definition that $ (C(S^{1}),\Delta)$ forms a commutative CQG.\\[0.2cm]   
	Now, the free product of $m (\geq 2)$ copies of $C(S^{1})$ (denoted by $\left( \mystar\limits_{m} C(S^1),  \Delta\right)$ can be represented by $m$ unitary elements $ \{z_{i}\}_{i=1}^{m} $ i.e. $\{z_{1},z_{2},...,z_{m}~|~ z_{i}z_{i}^{*}=z_{i}^{*}z_{i}=1 ~\forall~ i \in \{1,2,...,m\} \}$. Define a coproduct $\Delta $ on $\{z_{i}\}_{i=1,...,m}$ by $\Delta(z_{i})=(z_{i} \otimes z_{i}) $ and 
	$\left( \mystar\limits_{m} C(S^1),  \Delta\right)$ is a CQG. It has the following universal property:\\
	If $(\mathcal{Q},\Delta)$ is a CQG generated by $m$ unitaries $\{u_{i}\}_{i=1,2,...,m}$ with $\Delta(u_{i})=u_{i} \otimes u_{i}$ for all $i \in \{1,2,...,m\}$, then there exist a surjective CQG morphism from $\left( \mystar\limits_{m} C(S^1),  \Delta\right)$ onto $(\mathcal{Q},\Delta)$ which sends $z_{i} \mapsto u_{i}$ for all $i \in \{1,2,...,m\}$. 
	(see \cite{Wangfree})\\
	
	\item The unitary easy CQG $H_{n}^{\infty +}$ is defined to be the universal $C^*$-algebra generated by $\{q_{ij}: i,j \in \{1,2,...,n\}\}$ such that \vspace{0.1cm}
	\begin{itemize}
		\item the matrices $(q_{ij})_{n \times n}$ and $(q_{ij}^{*})_{n \times n}$ are unitary. \vspace{0.1cm}
		\item $q_{ij}$'s are normal partial isometries for all $i,j$. \vspace{0.1cm}
	\end{itemize}
	The coproduct $\Delta $ on generators is again given by $\Delta(q_{ij})=\sum_{k=1}^{n} q_{ik}\otimes q_{kj}$. ( see \cite{easy} for details on the unitary easy quantum group ) \\
	
	\item Let $(Q, \Delta )$ be a compact quantum group with an automorphism $\varphi$ such that $\varphi^{2}=id_{Q}$. The doubling of $Q$ is a CQG (denoted as $ (\mathcal{D}_{\varphi}(Q),\Delta_{\varphi})$)  whose underlying $C^*$-algebra is $Q \oplus Q$ and coproduct $ \Delta_{\varphi} $ is defined by 
	$$ \Delta_{\varphi}\circ \zeta =(\zeta \otimes \zeta + \eta \otimes [\eta \circ \varphi])\circ \Delta , $$ 
	$$ \Delta_{\varphi}\circ \eta =(\zeta \otimes \eta + \eta \otimes [\zeta\circ \varphi])\circ \Delta , $$ 
	where $ \zeta,\eta : Q \to Q \oplus Q $ such that             
	$\zeta(a)=(a,0)$ and $\eta(b)=(0,b)$ (for the construction see \cite{Soltan}).\\
\end{enumerate}

For the following definitions and discussions of this subsection, readers are referred to \cite{Wang} and \cite{Bichon}.
\begin{Def}
	A CQG $(\mathcal{Q},\Delta)$ is said to be act faithfully on a unital $C^{*}$-algebra $\mathcal{C}$ if there exists a unital $C^{*}$-homomorphism $\alpha:\mathcal{C} \to \mathcal{C} \otimes \mathcal{Q} $ such that \vspace{0.1cm}
	\begin{itemize}
		\item[(i)]$(\alpha \otimes id_{\mathcal{Q}})\alpha = (id_{\mathcal{C}} \otimes \Delta)\alpha.$ \vspace{0.1cm}
		\item[(ii)] $span\{\alpha(\mathcal{C})(1\otimes \mathcal{Q})\}$ is dense in $\mathcal{C} \otimes \mathcal{Q}.$ \vspace{0.1cm}
		\item[(iii)] The $*$-algebra generated by the set $\{(\theta\otimes id)\alpha(\mathcal{C}) : \theta \in \mathcal{C}^{*}\}$ is norm-dense in $\mathcal{Q}$.\vspace{0.1cm}
	\end{itemize}
\end{Def}
$((\mathcal{Q},\Delta),\alpha)$ is also called \textbf{quantum transformation group of $\mathcal{C} $}.\\

Given a unital $C^{*}$-algebra $ \mathcal{C} $, one can introduce a \textbf{category} $ \mathfrak{C} $ whose objects are quantum transformation groups of $\mathcal{C}$ and morphism from $((\mathcal{Q}_{1},\Delta_{1}),\alpha_{1})$ to $((\mathcal{Q}_{2},\Delta_{2}),\alpha_{2})$  be a CQG morphism $\phi :(\mathcal{Q}_{1},\Delta_{1}) \to (\mathcal{Q}_{2},\Delta_{2})$ such that $ (id_{\mathcal{C}} \otimes \phi)\alpha_{1}=\alpha_{2} $. It is called \textbf{category of quantum transformation group of $\mathcal{C}$}.\\
The \textbf{universal object of the category $ \mathfrak{C} $} be a quantum transformation group of $\mathcal{C}$, denoted by $((\widehat{\mathcal{Q}},\widehat{\Delta}),\widehat{\alpha})$, satisfying the following universal property :\\
For any object $((\mathcal{B}, \Delta_{\mathcal{B}}),\beta)$ from the category of quantum transformation group of $\mathcal{C}$, there is a surjective CQG morphism $\widehat{\phi}: (\widehat{\mathcal{Q}}, \widehat{\Delta}) \to (\mathcal{B}, \Delta_{\mathcal{B}}) $ such that $(id_{\mathcal{C}} \otimes \widehat{\phi})\widehat{\alpha}=\beta $. 

\begin{Def}
	Given a unital $C^{*}$-algebra $\mathcal{C}$, the quantum automorphism group of $ \mathcal{C} $ is the underlying CQG of the universal object of the category of quantum transformation group of $\mathcal{C}$ if the universal object exists.
\end{Def}

\begin{rem}
	In the above category, the universal object might fail to exist in general. For the existence of a universal object, one usually restricts the category to a sub-category in the following manner:\\
	Fix a linear functional $ \tau: \mathcal{C} \to \mathbb{C} $. Now, define a subcategory $\mathfrak{C}_{\tau}$ whose objects are those quantum transformation group of $\mathcal{C}$, $((\mathcal{Q},\Delta),\alpha)$ for which $(\tau \otimes id)\alpha(.)=\tau(.).1 $ on a suitable subspace of $\mathcal{C}$ and morphisms are taken as the above.  
\end{rem}

\noindent Examples:
\begin{enumerate}
	\item For $n$ points space $X_{n}$, the universal object in the category of quantum transformation group of $C(X_{n})$ exists and the underlying $C^{*}$-algebra of quantum automorphism group of $C(X_{n})$ is the universal $C^{*}$-algebra generated by $\{u_{ij}\}_{i,j=1,2,...,n}$ such that the following relations are satisfied: \vspace{0.1cm}
	\begin{itemize}
		\item  $u_{ij}^{2}=u_{ij}=u_{ij}^{*} $ for all $i,j \in \{1,2,...,n\}$, \vspace{0.1cm}
		\item  $\sum_{k=1}^{n} u_{ik}= \sum_{k=1}^{n} u_{kj}=1 $ for all $i,j \in \{1,2,...,n\}$. \vspace{0.1cm}
	\end{itemize} 
	Moreover, the coproduct on generators is given by $\Delta(u_{ij})= \sum_{k=1}^{n}u_{ik} \otimes u_{kj}$. Then the quantum automorphism group of $C(X_{n})$ is the quantum permutation group, $S_{n}^{+}$  (see \cite{Wang, BBC} for more details).\\
	
	\item For the $C^{*}$-algebra $ M_{n}(\mathbb{C}) $, the universal object in the category of quantum transformation group of $ M_{n}(\mathbb{C}) $ (for $n \geq 2 $) does not exist. But if we fix a linear functional $\tau'$ on $ M_{n}(\mathbb{C}) $ which is defined by $\tau'(A)=Tr(A)$ and assume that any object of the category also preserves $\tau'$ i.e. $(\tau' \otimes id)\alpha(.)=\tau'(.).1 $ on $M_{n}(\mathbb{C})$, then the universal object would exist in $\mathfrak{C}_{\tau'}$ (see \cite{Wang} for more details).
\end{enumerate}

\subsection{Quantum symmetry of a graph $C^{*}$-algebra}
Let $\Gamma=\{V(\Gamma),E(\Gamma),r,s\}$ be a finite, connected graph. Since $\Gamma$ is connected, it is enough to define an action on the partial isometries corresponding to edges. 
\begin{Def}(Definition 3.4 of \cite{Mandal})
	Given a connected graph $\Gamma$, a faithful action $\alpha$ of a CQG $\mathcal{Q}$ on a $C^{*}$-algebra $C^{*}(\Gamma)$ is said to be linear if $ \alpha(S_{e})=\sum_{f \in E(\Gamma)} S_{f} \otimes q_{fe}$, where $q_{ef}\in \mathcal{Q}$ for each $e,f \in E(\Gamma)$.
\end{Def}

\noindent Let 
\begin{itemize}
	\item[1)] $\mathcal{I}=\{u\in V(\Gamma) : u \text{ is not a source of any edge of } \Gamma\}$ \vspace{0.1cm}
	\item[2)] $ E'= \{(e,f) \in E(\Gamma) \times E(\Gamma) : S_{e}S_{f}^{*} \neq 0 \}=\{(e,f) \in E(\Gamma) \times E(\Gamma) : r(e)=r(f) \}$ \vspace{0.1cm}
\end{itemize}
$\bullet$ It can be shown that $\{ p_{u}, S_{e}S_{f}^{*} : u \in \mathcal{I}, (e,f) \in E' \}$ is a linearly independent set. (Lemma 3.2 of \cite{Mandal})\\[0.2cm]
Now, define $\mathcal{V}_{2,+}= span\{ p_{u}, S_{e}S_{f}^{*} : u \in \mathcal{I}, (e,f) \in E' \}$ and a linear functional $\tau: \mathcal{V}_{2,+} \to \mathbb{C}$ by $\tau(S_{e}S_{f}^{*})=\delta_{ef}$, $\tau(p_{u})=1 $ for all $ (e,f) \in E' $ and $ u \in \mathcal{I}$.
(see subsection 3.1 of \cite{Mandal})\\[0.1cm]
$\bullet$ One can check that $\alpha(\mathcal{V}_{2,+}) \subseteq \mathcal{V}_{2,+} \otimes \mathcal{Q}$. (Lemma 3.6 of \cite{Mandal})\\
Therefore the equation $(\tau \otimes id)\alpha(.)=\tau(.).1 $ on $\mathcal{V}_{2,+}$ makes sense.\\

\begin{Def}(Definition 3.7 of \cite{Mandal})
	For a finite, connected graph $\Gamma $, define a category $\mathfrak{C}_{\tau}^{Lin} $ whose objects are $((\mathcal{Q},\Delta),\alpha) $, quantum transformation group of $ C^{*}(\Gamma)$ such that $(\tau \otimes id)\alpha(.)=\tau(.).1 $ on $\mathcal{V}_{2,+}$. Morphism from $((\mathcal{Q}_{1},\Delta_{1}),\alpha_{1})$ to $((\mathcal{Q}_{2},\Delta_{2}),\alpha_{2})$ is given by a CQG morphism $\Phi:\mathcal{Q}_{1} \to \mathcal{Q}_{2} $ such that $ (id_{C^{*}(\Gamma)} \otimes \phi)\alpha_{1}=\alpha_{2} $.
\end{Def}

$F^{\Gamma}$ is a $ (|E(\Gamma)| \times |E(\Gamma)| ) $ matrix such that $(F^{\Gamma})_{ef}= \tau(S_{e}^{*}S_{f})$. It can be shown that $F^{\Gamma}$ is an invertible diagonal matrix. Therefore, $A_{U^{t}}(F^{\Gamma})$ is a CQG. We refer the readers to Proposition 3.8 and Theorem 3.9 of \cite{Mandal} for the proof of the following theorem.          
\begin{thm}
	For a finite connected graph $\Gamma$,\vspace{0.1cm}
	\begin{enumerate}
		\item there is a surjective $C^{*}$-homomorphism from $A_{U^{t}}(F^{\Gamma})$ to any object in category $\mathfrak{C}_{\tau}^{Lin}$. \\
		\item the category $\mathfrak{C}_{\tau}^{Lin} $ admits a universal object.\\
	\end{enumerate}
	We denote the universal object by $ Q_{\tau}^{Lin} $ in category $\mathfrak{C}_{\tau}^{Lin} $.\\
\end{thm}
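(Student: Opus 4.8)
The plan is to prove (1) by a direct computation with the linearity of the action and the invariance condition, and then to deduce (2) from (1) by verifying that $A_{U^t}(F^\Gamma)$, with its standard coproduct and the obvious coaction on $C^*(\Gamma)$, is itself an object of $\mathfrak{C}_\tau^{Lin}$.

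For (1), fix an object $((\mathcal{Q},\Delta),\alpha)$; linearity gives $\alpha(S_e)=\sum_{f\in E(\Gamma)}S_f\otimes q_{fe}$, and I set $U=(q_{fe})_{e,f}$. Since $\alpha$ is a unital $*$-homomorphism, $\alpha(S_e^*S_f)=\alpha(S_e)^*\alpha(S_f)=\sum_g S_g^*S_g\otimes q_{ge}^*q_{gf}=\sum_g p_{r(g)}\otimes q_{ge}^*q_{gf}$ (using $S_g^*S_h=\delta_{gh}p_{r(g)}$), while $\alpha(S_eS_f^*)=\sum_{g,h}S_gS_h^*\otimes q_{ge}q_{hf}^*$. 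Every $p_v$ lies in $\mathcal{V}_{2,+}$ (directly if $v\in\mathcal{I}$, and via $p_v=\sum_{s(h)=v}S_hS_h^*$ otherwise), so $S_e^*S_f$ and $S_eS_f^*$ lie in $\mathcal{V}_{2,+}$; applying $\tau\otimes\mathrm{id}$ (legitimate since $\alpha(\mathcal{V}_{2,+})\subseteq\mathcal{V}_{2,+}\otimes\mathcal{Q}$ by Lemma~4.6 of \cite{Mandal}) and using $\tau(S_gS_h^*)=\delta_{gh}$, $\tau(p_{r(g)})=(F^\Gamma)_{gg}$, and $\tau(S_e^*S_f)=(F^\Gamma)_{ef}=\delta_{ef}(F^\Gamma)_{ee}$, the identity $(\tau\otimes\mathrm{id})\alpha(\cdot)=\tau(\cdot)1$ yields $\sum_g q_{ge}q_{gf}^*=\delta_{ef}1$ and $\sum_g (F^\Gamma)_{gg}q_{ge}^*q_{gf}=\delta_{ef}(F^\Gamma)_{ee}1$; in matrix form, $U^t(U^t)^*=1$ and $U^*F^\Gamma U=F^\Gamma$. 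The second identity makes $U$ invertible with $U^{-1}=(F^\Gamma)^{-1}U^*F^\Gamma$, so $U^t$ is an invertible co-isometry, hence unitary, and then $U(F^\Gamma)^{-1}U^*F^\Gamma=UU^{-1}=1=(F^\Gamma)^{-1}U^*F^\Gamma U$. Thus the $q_{fe}$ satisfy exactly the defining relations of $A_{U^t}(F^\Gamma)$, so its universal property gives a $C^*$-homomorphism $\pi:A_{U^t}(F^\Gamma)\to\mathcal{Q}$ with $\pi(q_{fe})=q_{fe}$, and $\pi$ is onto because faithfulness of $\alpha$ forces the $q_{fe}$ to generate $\mathcal{Q}$.

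For (2), I would equip $A_{U^t}(F^\Gamma)$ with the $*$-homomorphism $\alpha_0:C^*(\Gamma)\to C^*(\Gamma)\otimes A_{U^t}(F^\Gamma)$ determined by $\alpha_0(S_e)=\sum_f S_f\otimes q_{fe}$. The first and main step is well-definedness: one must check that the elements $T_e:=\sum_f S_f\otimes q_{fe}$ together with the projections $P_v$ defined by $P_v:=\sum_{s(h)=v}T_hT_h^*$ for a non-sink $v$ and $P_u:=T_e^*T_e$ for a sink $u=r(e)$ satisfy relations (i)--(ii) of $C^*(\Gamma)$, that $\sum_v P_v=1\otimes 1$, and that the sink recipe is independent of the chosen $e$; this is exactly where the relations ``$U^t$ unitary'' and ``$U^*F^\Gamma U=F^\Gamma$'' get used, by expanding and comparing coefficients of the linearly independent family $\{S_\gamma S_\mu^*\}$ (Lemma~4.2 of \cite{Mandal}). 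Once $\alpha_0$ is known to be a well-defined unital $*$-homomorphism, the remaining object axioms are formal: the coaction identity $(\alpha_0\otimes\mathrm{id})\alpha_0=(\mathrm{id}\otimes\Delta)\alpha_0$ follows from $\Delta(q_{fe})=\sum_g q_{fg}\otimes q_{ge}$; Podle\'s density of $\mathrm{span}\{\alpha_0(C^*(\Gamma))(1\otimes A_{U^t}(F^\Gamma))\}$ follows from unitarity of $U^t$, since $\sum_e T_e(1\otimes q_{fe}^*)$ recovers $S_f\otimes 1$; faithfulness is automatic because the $q_{fe}$ generate $A_{U^t}(F^\Gamma)$; and $\tau$-invariance on $\mathcal{V}_{2,+}$ is the computation $(\tau\otimes\mathrm{id})\alpha_0(S_eS_f^*)=\sum_g q_{ge}q_{gf}^*=\delta_{ef}1$ together with $(\tau\otimes\mathrm{id})\alpha_0(p_u)=\sum_g (F^\Gamma)_{gg}q_{ge}^*q_{ge}=(F^\Gamma)_{ee}1=1$ for a sink $u=r(e)$, both immediate from the same relations. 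Hence $((A_{U^t}(F^\Gamma),\Delta),\alpha_0)$ is an object, and for any object $((\mathcal{Q},\Delta),\alpha)$ the map $\pi$ from (1) is a CQG morphism (on generators $(\pi\otimes\pi)\Delta$ and $\Delta\pi$ both equal $\sum_g q_{fg}\otimes q_{ge}$ after applying $\pi$) intertwining $\alpha_0$ with $\alpha$ (apply $\mathrm{id}\otimes\pi$ to $\alpha_0(S_e)$), so $A_{U^t}(F^\Gamma)$ is the universal object.

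I expect the only genuinely delicate point to be the well-definedness of $\alpha_0$ in step (2): one has to split the vertex set into sinks and non-sinks, keep careful track of the diagonal entries of $F^\Gamma$ (each $(F^\Gamma)_{ee}$ being $1$ when $r(e)$ is a sink and the out-degree of $r(e)$ otherwise), and repeatedly use linear independence of the $S_\gamma S_\mu^*$ to equate coefficients; everything else is routine once that is settled. If one prefers not to construct $\alpha_0$ by hand, an alternative for (2) is to use (1) to observe that every object is a quotient of the single algebra $A_{U^t}(F^\Gamma)$, take $Q_\tau^{Lin}$ to be its largest quotient carrying a compatible coaction (the quotient by the intersection of the relevant ideals), and then show this quotient is again an object — but this route still reduces to essentially the same coefficient-comparison analysis.
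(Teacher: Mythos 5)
Note first that the paper does not prove this theorem itself: it explicitly defers to Proposition 4.8 and Theorem 4.9 of \cite{Mandal}, so your attempt has to be measured against that argument. For part (1), the first half of your computation is sound and is essentially Proposition 4.8 of \cite{Mandal}: $\tau$-invariance on $S_eS_f^*$ (together with linear independence of $\{S_gS_h^*: r(g)=r(h)\}$ to handle $r(e)\neq r(f)$) gives $U^t(U^t)^*=1$, and $\tau$-invariance on $p_{r(e)}=S_e^*S_e$ gives $U^*F^{\Gamma}U=F^{\Gamma}$. The gap is in passing to the remaining relations. From $(F^{\Gamma})^{-1}U^*F^{\Gamma}U=1$ you only get that $U$ is \emph{left}-invertible, and over a noncommutative $C^*$-algebra a left-invertible element need not be invertible (an isometry is the standard counterexample); moreover, even granting invertibility of $U$, over a noncommutative ring this does not yield invertibility of $U^t$, since transposition is not an anti-homomorphism of $M_n(\mathcal{Q})$. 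Concretely, the missing relation $(U^t)^*U^t=1$ reads $\sum_g q_{eg}^*q_{fg}=\delta_{ef}$, a sum over the \emph{second} index, which cannot be produced from your two identities (both sums over the first index) by matrix algebra alone. The repair needs the quantum group structure: the coaction identity forces $\Delta(q_{fe})=\sum_g q_{fg}\otimes q_{ge}$, so the $q_{fe}$ lie in the canonical Hopf $*$-algebra $\mathcal{Q}_0$; left-invertibility of $U$ then forces $\epsilon(q_{fe})=\delta_{fe}$, the antipode $\kappa$ produces a two-sided inverse of $U$ (so $U(F^{\Gamma})^{-1}U^*F^{\Gamma}=1$ does follow), and invertibility of the conjugate corepresentation $\overline{U}=(U^t)^*$ — a genuine theorem about compact quantum groups, not linear algebra — gives invertibility of $U^t$ and hence its unitarity.

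Part (2) is the more serious problem: your primary route would fail. The pair $((A_{U^t}(F^{\Gamma}),\Delta),\alpha_0)$ is in general \emph{not} an object of $\mathfrak{C}_{\tau}^{Lin}$, because the defining relations of $A_{U^t}(F^{\Gamma})$ do not imply that the elements $T_e=\sum_f S_f\otimes q_{fe}$ satisfy the graph-algebra relations; the "delicate well-definedness check" you flag is not delicate but false. Indeed, if $\alpha_0$ were well defined, part (1) would make $A_{U^t}(F^{\Gamma})$ the universal object for \emph{every} finite connected graph, contradicting Theorem 5.1 of \cite{Mandal} quoted in Section 2 (for $P_{m+1}$ the universal object is $C(S^1)^{*m}$, a proper quotient of $A_{U^t}(F^{P_{m+1}})$) and indeed the main theorem of this paper, whose entire content is that the \emph{extra} relations imposed on the $q_{fe}$ by well-definedness of the action collapse $Q_{\tau}^{Lin}$ down to $C(S^1)^{*|E(\Gamma)|}$ for the class $\mathcal{G}$. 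The correct construction is the one you relegate to a fallback: take the quotient of $A_{U^t}(F^{\Gamma})$ by the closed two-sided ideal generated by the relations needed for $\alpha_0$ to respect the Cuntz--Krieger relations, verify that this is a Hopf $*$-ideal (so the coproduct descends and the quotient is a CQG), that the induced action is faithful and $\tau$-preserving, and that every object factors through the quotient because, by the computations underlying part (1), those extra relations hold automatically in any object. That is what Theorem 4.9 of \cite{Mandal} does, and it is not "essentially the same coefficient comparison" as making $A_{U^t}(F^{\Gamma})$ itself act — it is the step that makes the universal object exist at all.
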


Recall from Subsection \ref{sec2.3} that $\left( \mystar\limits_{m} C(S^1),  \Delta\right)$ has a CQG structure. The following proposition tells us that this CQG always acts on a graph $C^{*}$-algebra $C^{*}(\Gamma)$ for a finite, connected graph $\Gamma$. For more details consult Proposition 3.11 and Corollary 3.12 from \cite{Mandal}.
\begin{prop}
	For a finite, connected, directed graph  $\Gamma=\{V(\Gamma),E(\Gamma),r,s\}$, \\ $\left(\mystar\limits_{|E(\Gamma)|} C(S^1),  \Delta\right)$ always belongs to category $\mathfrak{C}_{\tau}^{Lin} $, where $ \alpha(S_{e})=S_{e} \otimes q_{e} $ for all $e \in E(\Gamma)$ and $\{q_{e}\}_{e \in E(\Gamma)}$ are unitaries generating $\left(\mystar\limits_{|E(\Gamma)|} C(S^1),  \Delta\right)$.\\
	Moreover, the underlying $C^{*}$-algebra of $Q_{\tau}^{Lin}$ is noncomutative for a graph $\Gamma $ with $|E(\Gamma)| \geq 2 $.\\
\end{prop}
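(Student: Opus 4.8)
The plan is to break the statement into two parts. For the first part, I would verify directly that the map $\alpha$ defined on generators by $\alpha(S_e) = S_e \otimes q_e$ extends to a well-defined $C^*$-homomorphism $C^*(\Gamma) \to C^*(\Gamma) \otimes (\underbrace{C(S^1)*\cdots*C(S^1)}_{|E(\Gamma)|})$ and defines a faithful CQG action in $\mathfrak{C}_\tau^{Lin}$. To see that $\alpha$ is well-defined, I would check that the images $\alpha(S_e)$ and $\alpha(p_v) := p_v \otimes 1$ satisfy the defining relations of $C^*(\Gamma)$: since each $q_e$ is a unitary, $\alpha(S_e)^*\alpha(S_e) = S_e^*S_e \otimes q_e^*q_e = p_{r(e)} \otimes 1 = \alpha(p_{r(e)})$, and $\sum_{s(f)=v} \alpha(S_f)\alpha(S_f)^* = \sum_{s(f)=v} S_fS_f^* \otimes q_f q_f^* = p_v \otimes 1$ whenever $s^{-1}(v) \neq \emptyset$; so the universal property of $C^*(\Gamma)$ gives the homomorphism. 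The coaction identity $(\alpha \otimes id)\alpha = (id \otimes \Delta_*)\alpha$ is checked on the generators $S_e$, where both sides equal $S_e \otimes q_e \otimes q_e$ by the definition $\Delta_*(q_e) = q_e \otimes q_e$. Density of $\mathrm{span}\{\alpha(C^*(\Gamma))(1 \otimes \mathcal{Q})\}$ follows because $q_e^* \in \mathcal{Q}$, so $\alpha(S_e)(1 \otimes q_e^*) = S_e \otimes 1$, and these together with $1 \otimes \mathcal{Q}$ generate $C^*(\Gamma) \otimes \mathcal{Q}$; the Podleś-type condition and faithfulness follow similarly since the slices $(\theta \otimes id)\alpha(S_e)$ recover scalar multiples of the $q_e$. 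Finally, for membership in $\mathfrak{C}_\tau^{Lin}$ I must check $(\tau \otimes id)\alpha(\cdot) = \tau(\cdot)1$ on $\mathcal{V}_{2,+}$: on $S_eS_f^*$ we get $\alpha(S_eS_f^*) = S_eS_f^* \otimes q_eq_f^*$, and since $\tau(S_eS_f^*) = \delta_{ef}$ the slice gives $\delta_{ef}\, q_e q_e^* = \delta_{ef} 1 = \tau(S_eS_f^*)1$; on $p_u$ with $u \in \mathcal{I}$ we get $p_u \otimes 1$ and $\tau(p_u) = 1$, so the condition holds. This is the routine half.

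For the second part — noncommutativity of the underlying $C^*$-algebra of $Q_\tau^{Lin}$ when $|E(\Gamma)| \geq 2$ — the strategy is to produce an object of $\mathfrak{C}_\tau^{Lin}$ whose CQG is noncommutative; since there is (by the previous theorem) a surjective CQG morphism from $Q_\tau^{Lin}$ onto any such object, noncommutativity of the target forces noncommutativity of $Q_\tau^{Lin}$. The natural candidate is $A_{U^t}(F^\Gamma)$ itself, which is the initial object in the category and is known to be noncommutative for $n \geq 2$; alternatively, one can exhibit a concrete noncommutative quotient. I expect the cleanest route is to invoke that $A_{U^t}(F^\Gamma)$ surjects onto $Q_\tau^{Lin}$ is the wrong direction — rather $Q_\tau^{Lin}$ is a quotient of $A_{U^t}(F^\Gamma)$, so I instead need to show $Q_\tau^{Lin}$ has a noncommutative quotient. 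Here I would use that $\underbrace{C(S^1)*\cdots*C(S^1)}_{|E(\Gamma)|}$ with $|E(\Gamma)| \geq 2$ is a quotient of... no: by Part 1 it is itself an object of the category, hence a quotient of $Q_\tau^{Lin}$, but the free product of two copies of $C(S^1)$ is already noncommutative, so $Q_\tau^{Lin}$ surjects onto a noncommutative $C^*$-algebra and is therefore itself noncommutative.

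The main obstacle is the second part only insofar as one must be careful about the direction of the morphisms: the universal object maps \emph{onto} every object, so it suffices that \emph{some} object is noncommutative, and Part 1 already supplies the free product $\underbrace{C(S^1)*\cdots*C(S^1)}_{|E(\Gamma)|}$, which for $|E(\Gamma)| \geq 2$ contains two free unitaries $z_1, z_2$ that do not commute (e.g. because $C(S^1)*C(S^1)$ surjects onto the full group $C^*$-algebra $C^*(\mathbb{Z}*\mathbb{Z})$ of the nonabelian free group, or more simply onto $C^*(\mathbb{Z}_2 * \mathbb{Z}_2)$ which is noncommutative). Thus the surjective CQG morphism $Q_\tau^{Lin} \twoheadrightarrow \underbrace{C(S^1)*\cdots*C(S^1)}_{|E(\Gamma)|}$ of Part 1 precludes $Q_\tau^{Lin}$ from being commutative. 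I would present Part 1 in full detail (the relation-checking is short) and dispatch Part 2 in a sentence citing the free-product universal property recalled in the preliminaries.
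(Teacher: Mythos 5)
Your argument is correct, and it is the standard one: the paper itself gives no proof of this proposition (it is recalled from [JM1], i.e.\ Joardar--Mandal), and your verification of the Cuntz--Krieger relations, the coaction identity, the Podle\'s condition, faithfulness, and $\tau$-invariance on $\mathcal{V}_{2,+}$ is exactly how that source establishes membership in $\mathfrak{C}_{\tau}^{Lin}$. Your Part 2 is also right once you sort out the direction of the arrows: the universal object surjects onto the object produced in Part 1, and $C(S^{1})*C(S^{1})\cong C^{*}(\mathbb{Z}*\mathbb{Z})$ is noncommutative, so $Q_{\tau}^{Lin}$ cannot be commutative when $|E(\Gamma)|\geq 2$.
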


Next, we will see an example of graph $C^{*}$-algebra for which universal object in category $\mathfrak{C}_{\tau}^{Lin} $ will be exactly $\left(\mystar\limits_{|E(\Gamma)|} C(S^1),  \Delta\right)$. Based on this example, our interest is to find a certain class of graphs $\mathcal{G}$, so that the universal object of $\mathfrak{C}_{\tau}^{Lin}$ remains $\left(\mystar\limits_{|E(\Gamma)|} C(S^1),  \Delta\right)$ for every graph from $\mathcal{G}$.\\

\noindent Example:
\begin{enumerate}
	\item Consider a graph $P_{m+1}$ with $ (m+1)$ vertices $\{1,2,...,m+1\}$ and $m$ consecutive edges $\{e_{12},e_{23},...,e_{m(m+1)}\}$ joining them, i.e. $s(e_{ij})=i$ and $r(e_{ij})=j$.  
	Using the relations of graph $C^{*}$-algebra it can be shown that $C^{*}(P_{m+1})$ is $C^{*}$-isomorphic to $M_{(m+1)}(\mathbb{C})$. In Theorem 4.1 in \cite{Wang}, Wang has described the universal object of category  $\mathfrak{C}_{\tau'}$ for $M_{n}(\mathbb{C})$. But viewing $M_{(m+1)}(\mathbb{C})$ as a graph $C^{*}$-algebra $C^{*}(P_{m+1})$, in \cite{Mandal} it  has been shown the universal object is $\left(\mystar\limits_{m} C(S^1),  \Delta\right)$ in category $\mathfrak{C}_{\tau}^{Lin} $. That is,  
\end{enumerate}

\begin{thm}
	For graph $P_{m+1}$, $Q_{\tau}^{Lin}$ is isomorphic to $\left(\mystar\limits_{m} C(S^1),  \Delta \right)$. \ (Theorem 4.2 of \cite{Mandal})
\end{thm}

%%%%%%%%%%%%%%%%%%%%%%%%%%%%%%%%%%%%%%%%%%%%%%%%%%%%%%%%%%%%%%%%%%%%%%%%

\section{Discussion on a special class of graph $C^{*}$-algebras} 
In this section, we will provide a special class of graphs $\mathcal{G}$ such that $Q_{\tau}^{Lin} $  corresponding to a graph $C^{*}$-algebra with respect to a graph $\Gamma \in \mathcal{G}$ is isomorphic to
$\left(\mystar\limits_{|E(\Gamma)|} C(S^1),  \Delta \right)$.\\

We will say that a graph $\Gamma=\{V(\Gamma),E(\Gamma),r,s\}$ satisfies the property \textbf{(R)} if it satisfies the following conditions:\vspace{0.1cm}
\begin{itemize}
	\item[\textbf{(R1)}] There does not exist any cycle of length $\geq$ 2. \vspace{0.2cm}
	\item[\textbf{(R2)}] There exists a path of length $(|V(\Gamma)|-1)$ which consists all the vertices.\vspace{0.2cm}
	\item[\textbf{(R3)}] Given any two vertices (may not be distinct) there exists at most one edge joining them. \vspace{0.2cm}
\end{itemize}
From now on, $\mathcal{G}$ is the set of all finite, directed graphs satisfying the property $\textbf{(R)}$.\\

\noindent Based on the properties \textbf{(R1), (R2)} and \textbf{(R3)}, in the next proposition, we will characterize the adjacency matrix of any graph $ \Gamma \in  \mathcal{G} $.
\begin{prop}
	\label{P}
	A graph $\Gamma=\{V(\Gamma),E(\Gamma),r,s\}$ satisfies the property \textbf{(R)} iff there exists an ordering of the vertices such that the adjacency matrix of $\Gamma$ with respect to that ordering is of the form $(a_{ij})_{n \times n}$ where $ a_{ij}\in \begin{cases}
	\{0\} ~ if ~ i>j \\ 
	\{1\} ~ if ~ j=i+1 \\
	\{0,1\} ~ otherwise
	\end{cases} $ \\
	i.e. adjacency matrix is an upper triangular 0-1 matrix with super diagonal entries all are 1.                                               
	
\end{prop}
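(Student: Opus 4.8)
The plan is to prove the two directions separately, translating each of the graph-theoretic conditions \textbf{(R1)}, \textbf{(R2)}, \textbf{(R3)} into a statement about the adjacency matrix $(a_{ij})$ under a suitably chosen vertex ordering.

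\textbf{($\Leftarrow$) Matrix form implies (R).} Suppose the vertices are ordered $v_1,\dots,v_n$ so that $a_{ij}=0$ for $i>j$, $a_{i,i+1}=1$, and $a_{ij}\in\{0,1\}$ otherwise. Condition \textbf{(R3)} is immediate since every entry is at most $1$. For \textbf{(R2)}, the superdiagonal $1$'s give edges $e_{i,i+1}$ from $v_i$ to $v_{i+1}$ for $i=1,\dots,n-1$, and concatenating them yields a path of length $n-1 = |V(\Gamma)|-1$ passing through all vertices. For \textbf{(R1)}, observe that if $\gamma = e_1 e_2 \cdots e_k$ is a cycle of length $k\geq 2$, then following its edges gives a sequence of vertices $v_{i_0}, v_{i_1}, \dots, v_{i_k}$ with $v_{i_0}=v_{i_k}$ and each consecutive pair joined by an edge; since $a_{pq}=0$ whenever $p>q$, each edge strictly increases the index, so $i_0 < i_1 < \cdots < i_k$, contradicting $i_0 = i_k$. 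Hence no cycle of length $\geq 2$ exists. (The diagonal entries $a_{ii}$ are unconstrained here, so loops are permitted, consistent with \textbf{(R1)} which only forbids cycles of length $\geq 2$.)

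\textbf{($\Rightarrow$) (R) implies the matrix form.} This is the substantive direction: I must produce the ordering. Condition \textbf{(R2)} hands me a path $P = e_1 e_2 \cdots e_{n-1}$ of length $n-1$ containing all $n$ vertices; since it has $n-1$ edges and meets $n$ distinct vertices, no vertex repeats, so $P$ lists the vertices in a definite order $w_1, w_2, \dots, w_n$ with $e_i$ going from $w_i$ to $w_{i+1}$. I will \emph{declare this to be the ordering} $v_i := w_i$. Then $a_{i,i+1} \geq 1$ for all $i$, and by \textbf{(R3)} $a_{i,i+1}=1$, giving the superdiagonal. Condition \textbf{(R3)} also gives $a_{ij}\in\{0,1\}$ for all $i,j$. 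It remains to show $a_{ij}=0$ for $i>j$, i.e. there is no "backward" edge. The key step: suppose $f$ is an edge from $v_i$ to $v_j$ with $i > j$. Then the path segment $e_j e_{j+1}\cdots e_{i-1}$ runs from $v_j$ to $v_i$ (using consecutive edges of $P$), and appending $f$ closes it up into a cycle $e_j e_{j+1}\cdots e_{i-1} f$ from $v_j$ back to $v_j$; its length is $(i-j)+1 \geq 2$, and its vertices $v_j, v_{j+1},\dots,v_i$ are distinct, so it is a genuine cycle of length $\geq 2$, contradicting \textbf{(R1)}. Therefore no such $f$ exists and $a_{ij}=0$ for $i>j$.

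The main obstacle is the ($\Rightarrow$) direction, specifically making sure the path supplied by \textbf{(R2)} really does pin down a workable ordering and that the "backward edge $\Rightarrow$ cycle" argument produces a \emph{cycle in the precise sense defined} (distinct source vertices), not merely a closed walk --- this is why I use the Hamiltonian-type path from \textbf{(R2)} rather than an arbitrary walk, since its vertices are automatically distinct. One should also double-check the degenerate small cases ($n=1$, where there are no edges to order and the statement is vacuous, and $n=2$). A minor point worth a sentence in the writeup: a backward edge with $i=j$ would be a loop, which \textbf{(R1)} does \emph{not} forbid, so the argument correctly only rules out $i>j$ and leaves the diagonal free, exactly matching the "otherwise $\{0,1\}$" clause of the claimed form.
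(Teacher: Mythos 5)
Your proposal is correct and follows essentially the same route as the paper: use the Hamiltonian-type path from \textbf{(R2)} to fix the vertex ordering, get the superdiagonal from \textbf{(R3)}, and rule out backward edges by exhibiting the cycle they would create, with the converse read off directly from the matrix form. If anything you are more careful than the paper in the converse direction (where the paper simply asserts that upper-triangularity forbids cycles of length $\geq 2$); just note that "each edge strictly increases the index" should be "each edge does not decrease the index", with strictness recovered from the fact that a cycle of length $\geq 2$ has distinct sources and hence contains no loop.
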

\begin{proof}
	Assume that $|V(\Gamma)|=n$ and $\gamma = e_{1}e_{2}...e_{n-1}$ is a path of length $(n-1)$ which contains all the vertices where each $e_{i}$ is an edge (i.e. a path of length 1). Firstly, observe that $e_{i}$ can't be a loop for each $i=1,2,...,n-1$. If it is so for some $e_j$, then $ s(e_{j})=r(e_{j})$. Therefore, clearly the path $\gamma = e_{1}e_{2}...e_{n-1}$ consists at most $(n-1)$ vertices which is a contradiction. Now, we denote $s(\gamma)=s(e_{1})=1$, $ s(e_{i})=r(e_{i-1})=i$ for each $(n-1)\geq i>1$ and  $r(e_{n-1})= n $. With respect to this ordering $(1,2,...,n)$ of vertices, $a_{i(i+1)} \geq 1 $ for each $1\leq i\leq (n-1)$. In the presence of \textbf{(R3)}, we get  $a_{i(i+1)} = 1 $ for each $1\leq i\leq (n-1)$. Now given any two vertices $i,j$ with $i>j$, if there exists an edge from $i$ to $j$ then the graph has a cycle of length $\geq 2$ which is a contradiction to \textbf{(R1)}. Therefore, $a_{ij}=0$ with $i>j$. Lastly, by \textbf{(R3)} there is at most one path joining $i$ to $j$, i.e. $a_{ij}\in \{0,1\}$ for $j-i>1$. Moreover, there is at most one loop at each vertex i.e. $a_{ii}\in \{0,1\}$ for all $i \in \{1,2,...,n\}$. \\
	
	Conversely, assume that the adjacency matrix of $ \Gamma $ is of the form $(a_{ij})$ where $ a_{ij}\in \begin{cases}
	\{0\} ~ if ~ i>j \\ 
	\{1\} ~ if ~ j=i+1 \\
	\{0,1\} ~ otherwise
	\end{cases} $ \\ 
	with respect to ordering $(v_{1},v_{2},...,v_{n})$.
	Since $(a_{ij})$ is an upper triangular matrix, the graph contains no cycle of length $\geq 2$.
	There is exactly one edge $e_{i}$ joining $v_{i}$ to $v_{i+1}$ for all $i=1,2,...,(n-1)$. This means there exists a path $e_{1}e_{2}...e_{n-1}$ of length $n-1$ which contains all the vertices. Since each $a_{ij} \in \{0,1\}$, there is at most one edge joining $v_{i}$ and $v_{j}$.                                     
	
\end{proof}

Now, we will identify some well known examples of graph $C^{*}$-algebras whose underlying graphs belong to $\mathcal{G}$ (Figs. \ref{Pn}-\ref{M4}).\\

\textbf{Examples:}
\begin{enumerate}
	\item Let $ P_{n} $ (Fig. \ref{Pn}) be a graph with  adjacency matrix $(a_{ij})_{n \times n}$ such that
	$ a_{ij}= \begin{cases}
	0 ~~ if ~~ i>j ~ \text{and} ~ (j-i)>1 \\ 
	1 ~~ if ~~ j=i+1 
	\end{cases} $ 
	then $P_{n} \in \mathcal{G}$ and $C^*(P_{n})$ is $C^*$-isomorphic $M_{n}(\mathbb{C})$ (see \cite{MS}).                                                               
	
	\begin{center}
		\begin{figure}[htb] \label{Pn}
			\begin{tikzpicture}
			\draw[fill=black] (0,0) circle (2pt) node[anchor=north]{$v_{1}$};
			\draw[fill=black] (2,0) circle (2pt) node[anchor=north]{$v_{2}$};
			\draw[fill=black] (4,0) circle (2pt) node[anchor=north]{$v_{3}$};
			\draw[fill=black] (6,0) circle (2pt) node[anchor=north]{$v_{n-1}$};
			\draw[fill=black] (8,0) circle (2pt) node[anchor=north]{$v_{n}$};
			
			%%%%%%%%%%%%%%%%%%%%%%%%%%%%%%%%%%%%%%%%%%%%%%%%%%%%%%%%%%%%%%%%%%%%%%%%%
			\draw[black,thick] (0,0)-- node{\rmidarrow}  node[above]{$e_{12}$} (2,0);
			\draw[black,thick] (2,0)-- node{\rmidarrow} node[above]{$e_{23}$} (4,0);
			\draw[loosely dotted, black,thick] (4.5,0)-- (5.5,0);
			\draw[black,thick] (6,0)--node{\rmidarrow}  node[above]{$e_{(n-1)n}$} (8,0);
			\end{tikzpicture}
			\caption{$P_{n}$}
		\end{figure}
	\end{center} 
	
	\item
	Let $ T $ (Fig. \ref{T}) be a graph with 2 vertices whose adjacency matrix is given by $ \begin{bmatrix}
	1 & 1 \\
	0 & 0
	\end{bmatrix} $.
	Then   $T \in \mathcal{G}$ and $C^*(T)$ is $C^*$-isomorphic Toeplitz algebra  $\mathcal{T}$ (for details consult \cite{MS}). 
	
	%%%%%%%%%%%%%%%%%%%%%%%%%%%%%%%%%%%%%%%%%%%%%%%%%%%%%%%%%%%%%%%%%%%%%%%%%%%%%%%%%%%%%%%%%%%%%%%%%%%%%%%%%%%%%%%%%%%%%%%%%%%%%%%%%%%%%%%%%%%%%%%%%%%%%%%%     
	\begin{center}
		\begin{figure}[htb] \label{T}
			\begin{tikzpicture}
			\draw[fill=black] (0,0) circle (2pt) node[anchor=south]{$v_{1}$};
			\draw[fill=black] (2,0) circle (2pt) node[anchor=south]{$v_{2}$};
			
			\begin{scope}[decoration={markings,
				mark=at position 0.25 with {\arrow[black,thick]{<}},
			}]
			\draw[black,postaction={decorate}, thick](0,0.5) circle (0.5)  node[above=0.6]{$e_{11}$};
			\end{scope}
			
			\begin{scope}[decoration={markings,
				mark=at position 0.50 with {\arrow[black,thick]{>}},
			}]
			\draw[black,postaction={decorate}, thick ](0,0)--node[above]{$e_{12}$}  (2,0);
			\end{scope}
			
			\end{tikzpicture}
			\caption{$T$}
		\end{figure}
	\end{center}                                                            
	%%%%%%%%%%%%%%%%%%%%%%%%%%%%%%%%%%%%%%%%%%%%%%%%%%%%%%%%%%%%%%%%%%%%%%%%%%%%%%%%%%%%%%%%%%%%%%%%%%%%%%%%%%%%%%%%%%%%%%%%%%%%%%%%%%%%%%%%%%%%%%%%%%%%%%%%     
	
	\item Let $ L_{2n-1} $ (Fig. \ref{L9}) be a graph with  adjacency matrix $(a_{ij})_{n \times n}$ such that
	$ a_{ij}= \begin{cases}
	0 ~~ if ~~ i>j \\ 
	1 ~~ if ~~ i \leq j
	\end{cases} $ 
	then $L_{2n-1} \in \mathcal{G}$ and $C^*(L_{2n-1})$ is $C^*$-isomorphic to the underlying $C^*$-algebra of  odd quantum sphere $C(S_{q}^{2n-1})$ \ \ (for details consult Theorem 4.4 of \cite{HSproj}). \\
	
	%%%%%%%%%%%%%%%%%%%%%%%%%%%%%%%%%%%%%%%%%%%%%%%%%%%%%%%%%%%%%%%%%%%%%%%%%%%%%%%%%%%%%%%%%%%%%%%%%%%%     
	\begin{center}
		\begin{figure}[htb] \label{L9}
			\begin{tikzpicture}
			\draw[fill=black] (0,0) circle (2pt) node[anchor=south]{$v_{1}$};
			\draw[fill=black] (2,0) circle (2pt) node[anchor=south]{$v_{2}$};
			\draw[fill=black] (4,0) circle (2pt) node[anchor=south]{$v_{3}$};
			\draw[fill=black] (6,0) circle (2pt) node[anchor=south]{$v_{4}$};
			\draw[fill=black] (8,0) circle (2pt) node[anchor=south]{$v_{5}$};
			
			\draw[black,thick](0,0.5) circle (0.5) node[above=0.25]{\rmidarrow}node[above=0.6]{$e_{11}$};
			\draw[black,thick](2,0.5) circle (0.5) node[above=0.25]{\rmidarrow}  node[above=0.6]{$e_{22}$};
			\draw[black,thick](4,0.5) circle (0.5) node[above=0.25]{\rmidarrow}  node[above=0.6]{$e_{33}$};
			\draw[black,thick](6,0.5) circle (0.5)  node[above=0.25]{\rmidarrow} node[above=0.6]{$e_{44}$};
			\draw[black,thick](8,0.5) circle (0.5)  node[above=0.25]{\rmidarrow} node[above=0.6]{$e_{55}$};
			
			\draw[black,thick] (0,0)-- node{\rmidarrow}  node[above]{$e_{12}$} (2,0);
			\draw[black,thick] (2,0)-- node{\rmidarrow} node[above]{$e_{23}$} (4,0);
			\draw[black,thick] (4,0)--node{\rmidarrow}  node[above]{$e_{34}$} (6,0);
			\draw[black,thick] (6,0)--node{\rmidarrow}  node[above]{$e_{45}$} (8,0);
			
			\draw[black,thick] (0,0) edge[bend left= -45] node{\rmidarrow}  node[above]{$e_{14}$} (4,0);
			\draw[black,thick] (0,0) edge[bend left= -55] node{\rmidarrow}  node[above]{$e_{15}$} (6,0);
			\draw[black,thick] (0,0) edge[bend left= -65] node{\rmidarrow}  node[above]{$e_{15}$} (8,0);
			\draw[black,thick] (2,0) edge[bend left= -45] node{\rmidarrow}  node[above]{$e_{24}$} (6,0);
			\draw[black,thick] (2,0) edge[bend left= -55] node{\rmidarrow}  node[above]{$e_{25}$} (8,0);
			\draw[black,thick] (4,0) edge[bend left= -45] node{\rmidarrow}  node[above]{$e_{35}$} (8,0);
			
			\end{tikzpicture}
			\caption{$L_{9}$}
		\end{figure}
	\end{center}
	%%%%%%%%%%%%%%%%%%%%%%%%%%%%%%%%%%%%%%%%%%%%%%%%%%%%%%%%%%%%%%%%%%%%%%%%%%%%%%%%%%%%%%%%%%%%%%  
	
	\item  Let $ \overline{L}_{2n-1} $ (Fig. \ref{Lbar2n-1})be a graph with n vertices whose adjacency matrix is of the form $(a_{ij})_{n \times n}$ where $a_{ij}=  \begin{cases}
	1 ~if ~ i=j \text{ and } j=i+1 \\
	0 ~ otherwise
	\end{cases}$, i.e. a Jordan block with all the eigenvalues are 1. 
	Again  ${\overline{L}_{2n-1}} \in \mathcal{G} $ and $C^{*}(\overline{L}_{2n-1})$ is $C^*$-isomorphic to  $C^*$-algebra $C(S_{q}^{2n-1})$ for $q \in [0,1)$. Moreover, it is well known that $C^{*}(\overline{L}_{3})$ is $C^*$-isomorphic to $C(SU_{q}(2))$ \ \ (see appendix A from \cite{HSproj}).                                     
	
	%%%%%%%%%%%%%%%%%%%%%%%%%%%%%%%%%%%%%%%%%%%%%%%%%%%%%%%%%%%%%%%%%%%%%%%%%%%%%%%%%%%%%%%%%%%%%%%%%%
	\begin{center}
		\begin{figure}[htpb] \label{Lbar2n-1}
			\begin{tikzpicture}
			\draw[fill=black] (0,0) circle (2pt) node[anchor=south]{$v_{1}$};
			\draw[fill=black] (2,0) circle (2pt) node[anchor=south]{$v_{2}$};
			\draw[fill=black] (4,0) circle (2pt) node[anchor=south]{$v_{3}$};
			\draw[fill=black] (6,0) circle (2pt) node[anchor=south]{$v_{n}$};

			\draw[black,thick](0,0.5) circle (0.5) node[above=0.25]{\rmidarrow}  node[above=0.6]{$e_{11}$};
			\draw[black,thick](2,0.5) circle (0.5) node[above=0.25]{\rmidarrow}  node[above=0.6]{$e_{22}$};
			\draw[black,thick](4,0.5) circle (0.5) node[above=0.25]{\rmidarrow}  node[above=0.6]{$e_{33}$};
			\draw[black,thick](6,0.5) circle (0.5) node[above=0.25]{\rmidarrow}  node[above=0.6]{$e_{nn}$};

			\draw[black,thick] (0,0)-- node{\rmidarrow}  node[above]{$e_{12}$} (2,0);
			\draw[black,thick] (2,0)-- node{\rmidarrow}  node[above]{$e_{23}$} (4,0);
			\draw[black,thick] (4,0)-- (4.4,0);
			\draw[black,thick] (5.6,0)-- (6,0);
			\draw[loosely dotted, black,thick] (4.6,0)-- (5.4,0);
			\end{tikzpicture}
			\caption{ $ \overline{L}_{2n-1} $}
		\end{figure}
	\end{center}
	%%%%%%%%%%%%%%%%%%%%%%%%%%%%%%%%%%%%%%%%%%%%%%%%%%%%%%%%%%%%%%%%%%%%%%%%%%%%%%%%%%%%%%%%%%%%%%%%%%%           
	
	\item Let $ M_{n} $ (Fig. \ref{M4}) be a graph whose adjacency matrix is a $ (n+1) \times (n+1) $ such that  $ a_{ij}= \begin{cases}
	0 ~~ if ~~ i>j \\
	0 ~~if ~~ i=j=(n+1)\\ 
	1 ~~ otherwise
	\end{cases} $, then $M_{n} \in \mathcal{G}$ and $C^*(M_{n})$ is $C^*$-isomorphic to the underlying $C^*$-algebra of even dimensional quantum ball, $C(B_{q}^{2n})$ (for more details consult \cite{HSball}). 
	
	%%%%%%%%%%%%%%%%%%%%%%%%%%%%%%%%%%%%%%%%%%%%%%%%%%%%%%%%%%%%%%%%%%%%%%%%%%%%%%%%%%%%%%%%%%%%%%%%
	\begin{center}
		\begin{figure}[htpb] \label{M4}
			\begin{tikzpicture}
			\draw[fill=black] (0,0) circle (2pt) node[anchor=south]{$v_{1}$};
			\draw[fill=black] (2,0) circle (2pt) node[anchor=south]{$v_{2}$};
			\draw[fill=black] (4,0) circle (2pt) node[anchor=south]{$v_{3}$};
			\draw[fill=black] (6,0) circle (2pt) node[anchor=south]{$v_{4}$};
			\draw[fill=black] (8,0) circle (2pt) node[anchor=south]{$v_{5}$};
			%%%%%%%%%%%%%%%%%%%%%%%%%%%%%%%%%%%%%%%%%%%%%%%%%%%%%%%%%%%%%%%%%%%%%%%%%
			\draw[black,thick](0,0.5) circle (0.5) node[above=0.25]{\rmidarrow}node[above=0.6]{$e_{11}$};
			\draw[black,thick](2,0.5) circle (0.5) node[above=0.25]{\rmidarrow}  node[above=0.6]{$e_{22}$};
			\draw[black,thick](4,0.5) circle (0.5) node[above=0.25]{\rmidarrow}  node[above=0.6]{$e_{33}$};
			\draw[black,thick](6,0.5) circle (0.5)  node[above=0.25]{\rmidarrow} node[above=0.6]{$e_{44}$};
			
			%%%%%%%%%%%%%%%%%%%%%%%%%%%%%%%%%%%%%%%%%%%%%%%%%%%%%%%%%%%%%%%%%%%%%%%%%
			\draw[black,thick] (0,0)-- node{\rmidarrow}  node[above]{$e_{12}$} (2,0);
			\draw[black,thick] (2,0)-- node{\rmidarrow} node[above]{$e_{23}$} (4,0);
			\draw[black,thick] (4,0)--node{\rmidarrow}  node[above]{$e_{34}$} (6,0);
			\draw[black,thick] (6,0)--node{\rmidarrow}  node[above]{$e_{45}$} (8,0);
			
			\draw[black,thick] (0,0) edge[bend left= -45] node{\rmidarrow}  node[above]{$e_{14}$} (4,0);
			\draw[black,thick] (0,0) edge[bend left= -55] node{\rmidarrow}  node[above]{$e_{15}$} (6,0);
			\draw[black,thick] (0,0) edge[bend left= -65] node{\rmidarrow}  node[above]{$e_{15}$} (8,0);
			\draw[black,thick] (2,0) edge[bend left= -45] node{\rmidarrow}  node[above]{$e_{24}$} (6,0);
			\draw[black,thick] (2,0) edge[bend left= -55] node{\rmidarrow}  node[above]{$e_{25}$} (8,0);
			\draw[black,thick] (4,0) edge[bend left= -45] node{\rmidarrow}  node[above]{$e_{35}$} (8,0);
			
			\end{tikzpicture}
			\caption{$M_{4}$}
		\end{figure}
	\end{center}
	%%%%%%%%%%%%%%%%%%%%%%%%%%%%%%%%%%%%%%%%%%%%%%%%%%%%%%%%%%%%%%%%%%%%%%%%%%%%%%%%%%%%%%%%%%%%%%%%%%                                                            
	
\end{enumerate}

\section{ Main Result }
Let $\Gamma=\{ V(\Gamma),E(\Gamma),r,s \}$ be a finite, connected, directed graph. We will show that the quantum symmetry in category $\mathfrak{C}_{\tau}^{Lin}$ corresponding to a graph $C^{*}$-algebra with respect to a graph $\Gamma \in \mathcal{G}$ is isomorphic to
$\left(\mystar\limits_{|E(\Gamma)|} C(S^1),  \Delta\right)$.\\

Assume that $V(\Gamma)=\{1,2,...,n\}$ and let the adjacency matrix of $\Gamma=(a_{ij})_{n \times n}$ be of the form as in Proposition \ref{P} (i.e. $ \Gamma \in \mathcal{G}$). Moreover, assume that $ Q_{\tau}^{Lin}$ is generated by the elements of the matrix $(q_{ef})_{|E(\Gamma)| \times |E(\Gamma)|}$ where $e,f \in E(\Gamma)$.\\

\noindent We will prove our main result, namely Theorem \ref{T1} by showing Proposition \ref{P1} and Proposition \ref{P2}. To prove those propositions, we need the help of the following auxiliary lemmas.

\begin{lem}
	\label{l1}
	If  $\Gamma=\{ V(\Gamma),E(\Gamma),r,s \}$ is a finite, connected, directed graph satisfying the following conditions:\vspace{0.1cm}
	\begin{itemize}
		\item[(H 1.1)] there exists some $ h \in E(\Gamma)$ such that $ s^{-1}(r(h)) \neq \emptyset $,\vspace{0.1cm}
		\item[(H 1.2)] there exists $ e \in E(\Gamma)$ such that either $ r^{-1}(s(e))=\emptyset $ or $q_{gh}=0$ for all $g \in r^{-1}(s(e))\neq \emptyset $.  
	\end{itemize}
	\vspace{0.1cm}	Then $q_{ef}=0 $ for all $f$ such that $s(f)=r(h)$.\\
\end{lem}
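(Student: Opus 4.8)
The plan is to push a single Cuntz--Krieger relation through the unital $*$-homomorphism $\alpha$ and then ``compress'' the resulting identity in $C^*(\Gamma)\otimes\mathcal Q$ by the partial isometry $S_e$ on the first leg.

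First I would use (H 1.1): since $s^{-1}(r(h))\ne\emptyset$, the vertex $r(h)$ is the source of some edge, so relation (ii) in the definition of $C^*(\Gamma)$ gives $p_{r(h)}=\sum_{\{f:\,s(f)=r(h)\}}S_fS_f^*$, and together with (i) this reads $S_h^*S_h=\sum_{\{f:\,s(f)=r(h)\}}S_fS_f^*$. Applying $\alpha$, expanding through $\alpha(S_h)=\sum_{g}S_g\otimes q_{gh}$ and $\alpha(S_f)=\sum_{l}S_l\otimes q_{lf}$, and simplifying the left side with $S_a^*S_b=\delta_{ab}\,p_{r(a)}$, one obtains
\[
\sum_{g\in E(\Gamma)} p_{r(g)}\otimes q_{gh}^*q_{gh}\ =\ \sum_{\{f:\,s(f)=r(h)\}}\ \sum_{l,l'\in E(\Gamma)} S_lS_{l'}^*\otimes q_{lf}q_{l'f}^*.
\]
Next I would multiply this identity on the left by $S_e^*\otimes 1$ and on the right by $S_e\otimes 1$. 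Using $S_e=p_{s(e)}S_e$ together with the elementary identities $S_e^*p_vS_e=\delta_{v,s(e)}\,p_{r(e)}$ and $S_e^*S_l=\delta_{el}\,p_{r(e)}$, the left side collapses to $p_{r(e)}\otimes\big(\sum_{\{g:\,r(g)=s(e)\}}q_{gh}^*q_{gh}\big)$, while in the right-hand double sum only the term $l=l'=e$ survives, yielding $p_{r(e)}\otimes\big(\sum_{\{f:\,s(f)=r(h)\}}q_{ef}q_{ef}^*\big)$.

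At this point hypothesis (H 1.2) makes the left side vanish: either $r^{-1}(s(e))=\emptyset$ and the sum is empty, or $q_{gh}=0$ for every $g\in r^{-1}(s(e))$ and each summand $q_{gh}^*q_{gh}$ is $0$. Hence $p_{r(e)}\otimes\sum_{\{f:\,s(f)=r(h)\}}q_{ef}q_{ef}^*=0$ in the minimal tensor product; since $p_{r(e)}$ is a nonzero projection this forces $\sum_{\{f:\,s(f)=r(h)\}}q_{ef}q_{ef}^*=0$, and being a sum of positive elements every summand vanishes, so $q_{ef}=0$ for all $f$ with $s(f)=r(h)$, which is the claim.

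The only step that needs genuine care is the compression: one has to keep track of precisely which monomials $p_{r(g)}$ and $S_lS_{l'}^*$ remain after multiplying by $S_e^*$ and $S_e$, and then observe that (H 1.2) is exactly the hypothesis that annihilates the compressed left-hand side. Everything else is formal bookkeeping with the relations of $C^*(\Gamma)$ recalled in Section 2 and with $\alpha$ being a $*$-homomorphism; in particular the argument uses only that $\alpha$ is a linear action, not the state-invariance built into the category $\mathfrak C_\tau^{Lin}$.
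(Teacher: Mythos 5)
Your proposal is correct and follows essentially the same route as the paper: apply $\alpha$ to the Cuntz--Krieger relation $S_h^*S_h=\sum_{\{f:s(f)=r(h)\}}S_fS_f^*$, compress by $S_e^*\otimes 1$ and $S_e\otimes 1$, kill the left side using (H 1.2), and conclude from positivity that each $q_{ef}q_{ef}^*=0$. The only cosmetic difference is that you rewrite $S_g^*S_g$ as $p_{r(g)}$ before compressing, whereas the paper compresses first; the bookkeeping and the conclusion are identical.
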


\begin{proof}
	
	At $r(h)$, we can write
	\begin{align*}
		& p_{r(h)}=S_{h}^{*}S_{h}=\sum_{\{f:s(f)=r(h)\}} S_{f}S_{f}^{*} ~~~~ [ \text{ by (H 1.1) }] \\
		\Rightarrow  & ~~\alpha \left( S_{h}^{*}S_{h} \right)= \alpha \left( \sum_{\{f:s(f)=r(h)\}} S_{f}S_{f}^{*} \right)  \\
		\Rightarrow & \sum_{g\in E(\Gamma)} S_{g}^{*}S_{g}\otimes q_{gh}^{*}q_{gh}= \sum_{k,l\in E(\Gamma)}S_{k}S_{l}^{*} \otimes \left( \sum_{\{f:s(f)=r(h)\}} q_{kf}q_{lf}^{*}\right)
	\end{align*}
	
	Multiplying both sides of the above equation by $ (S_{e}^{*} \otimes 1)$ from left and $(S_{e} \otimes 1)$ from right, we get
	
	\begin{align*}
		& \sum_{g\in E(\Gamma)}S_{e}^{*}S_{g}^{*}S_{g}S_{e}\otimes q_{gh}^{*}q_{gh}= \sum_{k,l\in E(\Gamma)}S_{e}^{*}S_{k}S_{l}^{*}S_{e} \otimes \left( \sum_{\{f:s(f)=r(h)\}} q_{kf}q_{lf}^{*}\right) \\
		\Rightarrow  & ~~ 0= S_{e}^{*}S_{e}S_{e}^{*}S_{e} \otimes \left( \sum_{\{f:s(f)=r(h)\}}q_{ef}q_{ef}^{*} \right)~~~~[\text{ by (H 1.2) and Proposition 2.2(iii) } ]\\ 
		\Rightarrow & ~~ S_{e}^{*}S_{e} \otimes \left( \sum_{\{f:s(f)=r(h)\}}q_{ef}q_{ef}^{*} \right)=0  \\
		\Rightarrow & \sum_{\{f:s(f)=r(h)\}}q_{ef}q_{ef}^{*}=0\\
		\Rightarrow & ~~ q_{ef}q_{ef}^{*}=0 \text{ for all } f ~\text{such that}~ s(f)=r(h) \\
		\Rightarrow & ~~ q_{ef}=0 \text{ for all } f ~\text{such that}~ s(f)=r(h).
	\end{align*}
\end{proof}

%%%%%%%%%%%%%%%%%%%%%%%%%  Lemma 2   %%%%%%%%%%%%%%%%%%%%%%%%%%%%%%%%%%%%%%%%%%%%%

\begin{lem}
	\label{l2}
	If  $\Gamma=\{ V(\Gamma),E(\Gamma),r,s \}$ is a finite, connected, directed graph such that \vspace{0.1cm}
	\begin{itemize}
		\item[(H 2.1)] there exists some $ h \in E(\Gamma)$ such that $ s^{-1}(r(h)) \neq \emptyset $, \vspace{0.1cm}
		\item[(H 2.2)] there exists $ e \in E(\Gamma)$ such that  $ s^{-1}(r(e))=\emptyset .$
	\end{itemize}
	\vspace{0.1cm} Then $q_{eh}=0$.
\end{lem}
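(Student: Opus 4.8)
The plan is to follow the template of Lemma \ref{l1}: apply the action $\alpha$ to a Cuntz--Krieger relation at a vertex which is a source of some edge, and then compress the resulting identity in $C^{*}(\Gamma)\otimes\mathcal{Q}$ by a single vertex projection so that one side collapses to zero. Only the defining relations of $C^{*}(\Gamma)$ and the fact that $\alpha$ is a linear action enter here; the functional $\tau$ plays no role.

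First, by (H 2.1) the vertex $r(h)$ is a source, so $p_{r(h)}=S_{h}^{*}S_{h}=\sum_{\{f:s(f)=r(h)\}}S_{f}S_{f}^{*}$. Applying $\alpha$, writing $\alpha(S_{h})=\sum_{g}S_{g}\otimes q_{gh}$, and using $S_{g}^{*}S_{g'}=\delta_{gg'}p_{r(g)}$ on the left-hand side, I get
\[
\sum_{g\in E(\Gamma)}p_{r(g)}\otimes q_{gh}^{*}q_{gh}=\sum_{\{f:s(f)=r(h)\}}\ \sum_{k,l\in E(\Gamma)}S_{k}S_{l}^{*}\otimes q_{kf}q_{lf}^{*}.
\]

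Next I would multiply both sides on the left by $p_{r(e)}\otimes 1$. The key observation is that, by (H 2.2), the vertex $r(e)$ emits no edge, hence $p_{r(e)}S_{k}=0$ for every $k\in E(\Gamma)$, so the entire right-hand side vanishes. On the left-hand side, mutual orthogonality of the vertex projections gives $p_{r(e)}p_{r(g)}=\delta_{r(e),r(g)}p_{r(e)}$, so the identity becomes
\[
p_{r(e)}\otimes\Big(\,\sum_{\{g:\,r(g)=r(e)\}}q_{gh}^{*}q_{gh}\Big)=0.
\]
Since $p_{r(e)}\neq 0$, the $\mathcal{Q}$-component must be zero, so $\sum_{\{g:r(g)=r(e)\}}q_{gh}^{*}q_{gh}=0$; as each summand is positive, every term vanishes, and in particular $q_{eh}^{*}q_{eh}=0$, i.e.\ $q_{eh}=0$ (one in fact obtains $q_{gh}=0$ for every $g$ with $r(g)=r(e)$).

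The argument is short and I do not expect a genuine obstacle; the only point requiring care is the choice of compression. Multiplying by $p_{r(e)}$ on the \emph{left} is precisely what kills the $S_{k}S_{l}^{*}$ terms, exactly because $r(e)$ is a sink --- it receives the edge $e$ but emits nothing --- whereas the $S_{e}^{*}(\cdot)S_{e}$ compression used in Lemma \ref{l1} would instead isolate data attached to edges leaving $s(e)$ and would be of no help here.
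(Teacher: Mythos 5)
Your proposal is correct and follows essentially the same route as the paper: both start from the relation $p_{r(h)}=S_h^*S_h=\sum_{\{f:s(f)=r(h)\}}S_fS_f^*$, apply $\alpha$, and compress on the left by $p_{r(e)}\otimes 1$, using that $r(e)$ is a sink to kill the right-hand side and orthogonality of the vertex projections to isolate $\sum_{\{g:r(g)=r(e)\}}q_{gh}^*q_{gh}=0$. The only cosmetic difference is that the paper phrases the vanishing of the right-hand side via $S_eS_k=0$ for all $k$ (after writing $p_{r(e)}=S_e^*S_e$) rather than $p_{r(e)}S_k=0$, which is the same fact.
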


\begin{proof}
	
	At $r(h)$, similarly we have
	\begin{align*}
		& \sum_{g\in E(\Gamma)} S_{g}^{*}S_{g}\otimes q_{gh}^{*}q_{gh}= \sum_{k,l\in E(\Gamma)}S_{k}S_{l}^{*} \otimes \left( \sum_{\{f:s(f)=r(h)\}} q_{kf}q_{lf}^{*}\right) \\      
		\Rightarrow  & \sum_{g:r(g)=r(e)} p_{r(e)}\otimes q_{gh}^{*}q_{gh} + \sum_{r(g)\neq r(e)} p_{r(g)} \otimes q_{gh}^{*}q_{gh} = \sum_{k,l\in E(\Gamma)}S_{k}S_{l}^{*} \otimes \left( \sum_{\{f:s(f)=r(h)\}} q_{kf}q_{lf}^{*}\right) 
	\end{align*} 
	Now, multiply both sides of the last equation by $(p_{r(e)} \otimes 1)$ from left and use the relation $p_{r(e)}=S_{e}^{*}S_{e}$
	\begin{equation}
		\sum_{r(g)=r(e)} p_{r(e)}\otimes q_{gh}^{*}q_{gh} + \sum_{r(g)\neq r(e)} p_{r(e)}p_{r(g)} \otimes q_{gh}^{*}q_{gh} = 
		\sum_{k,l\in E(\Gamma)}S_{e}^{*}S_{e}S_{k}S_{l}^{*} \otimes \left( \sum_{\{f:s(f)=r(h)\}} q_{kf}q_{lf}^{*}\right)  \label{l2.1}
	\end{equation}
	
	Observe that, $e$ can't be a loop ( as, $ s^{-1}(r(e))=\emptyset $ ). Therefore, $ S_{e}S_{k}=0 $ for all $ k\in E(\Gamma) $ [\text{by (H 2.2)} and Proposition 2.2 (iii)]. Also, orthogonality of $ \{p_{v}: v \in V(\Gamma)\} $ together with  \eqref{l2.1} implies
	\begin{align*}
		& \sum_{r(g)=r(e)} p_{r(e)}\otimes q_{gh}^{*}q_{gh}=0 \\
		\Rightarrow  &~~  q_{gh}^{*}q_{gh}=0 \text{ for all } g ~\text{such that} ~ r(g)=r(e)\\
		\Rightarrow  &~~  q_{eh} =0 .
	\end{align*}
	
\end{proof}

%%%%%%%%%%%%%%%%%%%%%%%%%%%%    Lemma 3    %%%%%%%%%%%%%%%%%%%%%%%%%%%%%%%%%%%%%%%%%%%

\begin{lem}
	\label{l3}
	If  $\Gamma=\{ V(\Gamma),E(\Gamma),r,s \}$ is a finite, connected, directed graph satisfying the following conditions:\vspace{0.1cm}
	\begin{itemize}
		\item[(H 3.1)] there exists some $ h \in E(\Gamma)$ such that $ s^{-1}(r(h)) \neq \emptyset $, \vspace{0.1cm}
		\item[(H 3.2)] $ e \in E(\Gamma)$ be a loop such that either $ r^{-1}(s(e))=\{e\} $ or $q_{gh}=0 $ for all $ g \in r^{-1}(s(e))-\{e\} \neq \emptyset. $
	\end{itemize}
	\vspace{0.1cm} Then $q_{ef}=0 $ for all $f \in s^{-1}(r(h)) \Leftrightarrow q_{eh}=0 $.\\ 
\end{lem}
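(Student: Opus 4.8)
The plan is to imitate the strategy of Lemmas \ref{l1} and \ref{l2}: write down the defining relation of $C^*(\Gamma)$ at the vertex $r(h)$, apply the action $\alpha$, expand both sides using linearity of $\alpha$ together with the graph relations, and then compress the resulting identity in $C^*(\Gamma)\otimes\mathcal Q$ by conjugating with the partial isometry $S_e$. The only new feature is that $e$ is a loop, which must be tracked carefully in the compression step.

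Concretely, by (H 3.1) the vertex $r(h)$ emits an edge, so $p_{r(h)}=S_h^*S_h=\sum_{\{f:s(f)=r(h)\}}S_fS_f^*$. Applying $\alpha$, using $\alpha(S_e)=\sum_f S_f\otimes q_{fe}$ and $S_g^*S_{g'}=\delta_{gg'}p_{r(g)}$, one gets exactly as in the proof of Lemma \ref{l1} the identity
\[
\sum_{g\in E(\Gamma)} S_g^*S_g\otimes q_{gh}^*q_{gh}=\sum_{k,l\in E(\Gamma)} S_kS_l^*\otimes\Big(\sum_{\{f:s(f)=r(h)\}}q_{kf}q_{lf}^*\Big).
\]
Now multiply by $S_e^*\otimes 1$ on the left and $S_e\otimes 1$ on the right. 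Since $e$ is a loop, $s(e)=r(e)$, so $p_{r(g)}S_e=S_e$ exactly when $r(g)=r(e)$, i.e.\ $g\in r^{-1}(s(e))$, giving $S_e^*p_{r(g)}S_e=p_{r(e)}$ in that case and $0$ otherwise; likewise $S_e^*S_kS_l^*S_e=\delta_{ke}\delta_{le}\,p_{r(e)}$. Hence the left-hand side becomes $p_{r(e)}\otimes\sum_{g\in r^{-1}(s(e))}q_{gh}^*q_{gh}$, and by (H 3.2) every term with $g\neq e$ vanishes (either $r^{-1}(s(e))=\{e\}$, or $q_{gh}=0$ for the remaining $g$), leaving $p_{r(e)}\otimes q_{eh}^*q_{eh}$; the right-hand side collapses to $p_{r(e)}\otimes\sum_{\{f:s(f)=r(h)\}}q_{ef}q_{ef}^*$. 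As $p_{r(e)}\neq 0$ in $C^*(\Gamma)$, cancelling it yields the key identity
\[
q_{eh}^*q_{eh}=\sum_{\{f:s(f)=r(h)\}}q_{ef}q_{ef}^*.
\]

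The asserted equivalence then falls out immediately: if $q_{eh}=0$ the right-hand side is a sum of positive elements equal to $0$, so each $q_{ef}q_{ef}^*=0$ and hence $q_{ef}=0$ for all $f\in s^{-1}(r(h))$; conversely if $q_{ef}=0$ for all such $f$, then $q_{eh}^*q_{eh}=0$, whence $q_{eh}=0$. The one point demanding care — and the reason this lemma is separate from Lemma \ref{l1} — is the bookkeeping in the compression: because $e$ is a loop it belongs to $r^{-1}(s(e))$, so the term $q_{eh}^*q_{eh}$ genuinely survives on the left rather than being forced to zero, which is precisely what produces the two-sided implication instead of the outright vanishing $q_{ef}=0$. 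Everything else is the routine partial-isometry algebra already used in the preceding lemmas.
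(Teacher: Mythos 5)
Your proof is correct and follows essentially the same route as the paper: apply $\alpha$ to $S_h^*S_h=\sum_{s(f)=r(h)}S_fS_f^*$, compress with $S_e^*(\cdot)S_e$, use (H~3.2) to kill all terms except $g=e$ on the left, and deduce $q_{eh}^*q_{eh}=\sum_{s(f)=r(h)}q_{ef}q_{ef}^*$, from which the equivalence follows by positivity. The only cosmetic difference is that you evaluate $S_e^*p_{r(e)}S_e=p_{r(e)}$ directly from $p_{s(e)}S_e=S_e$, whereas the paper reaches the same identity $S_e^*S_e^*S_eS_e=S_e^*S_e$ via the Cuntz--Krieger relation at $s(e)$.
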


\begin{proof}
	
	At $r(h)$, similarly one can write
	\begin{align*}
		& \sum_{g\in E(\Gamma)} S_{g}^{*}S_{g}\otimes q_{gh}^{*}q_{gh}= \sum_{k,l\in E(\Gamma)}S_{k}S_{l}^{*} \otimes \left( \sum_{\{f:s(f)=r(h)\}} q_{kf}q_{lf}^{*}\right) 
	\end{align*}	
	
	Now, multiplying both sides of the above equation by $(S_{e}^{*}\otimes 1)$ from left and $(S_{e}\otimes 1)$ from right, we get\\
	\begin{equation*}
		\sum_{g\in E(\Gamma)} S_{e}^{*}S_{g}^{*}S_{g}S_{e}\otimes q_{gh}^{*}q_{gh}= \sum_{k,l\in E(\Gamma)}S_{e}^{*}S_{k}S_{l}^{*}S_{e} \otimes \left( \sum_{\{f:s(f)=r(h)\}} q_{kf}q_{lf}^{*}\right) 
	\end{equation*}
	\begin{equation}
		\Rightarrow S_{e}^{*}S_{e}^{*}S_{e}S_{e}\otimes q_{eh}^{*}q_{eh}=S_{e}^{*}S_{e}S_{e}^{*}S_{e}\otimes \left( \sum_{\{f:s(f)=r(h)\}} q_{ef}q_{ef}^{*} \right) ~~~~[\text{ by (H 3.2) }]  \label{l3.1}
	\end{equation}
	
	Since $e$ is a loop,
	\begin{equation*}
		S_{e}^{*}S_{e}=S_{e}S_{e}^{*}+\sum_{\{f: ~ s(f)=s(e),~ f \neq  e\}}S_{f}S_{f}^{*}
	\end{equation*}	
	
	Multiplying both sides of the above equation by $S_{e}^{*}$ from left and $	S_{e}$ from right, we get
	
	\begin{align*}
		&~~~~~~~~~ S_{e}^{*}S_{e}^{*}S_{e}S_{e}=S_{e}^{*}S_{e}S_{e}^{*}S_{e} +  \sum_{\{f: ~ s(f)=s(e),~ f \neq  e\}}S_{e}^{*}S_{f}S_{f}^{*}S_{e}\\
		& \Rightarrow S_{e}^{*}S_{e}^{*}S_{e}S_{e}=S_{e}^{*}S_{e}S_{e}^{*}S_{e} = S_{e}^{*}S_{e}~~[ \text{ Since } S_{e}^{*}S_{f}=0 \text{ for } e\neq f ~]
	\end{align*}
	Hence using equation \eqref{l3.1},
	\begin{align*}
		S_{e}^{*}S_{e}\otimes q_{eh}^{*}q_{eh}=S_{e}^{*}S_{e} \otimes \left( \sum_{\{f:s(f)=r(h)\}} q_{ef}q_{ef}^{*} \right) 
	\end{align*}
	Therefore,
	\begin{equation*}
		q_{eh}^{*}q_{eh}=0 \Leftrightarrow \left( \sum_{\{f:s(f)=r(h)\}} q_{ef}q_{ef}^{*} \right)=0
	\end{equation*}
	i.e. 
	\begin{equation*}
		q_{eh}=0 \Leftrightarrow  q_{ef}=0 ~\text{for all}~ f \in s^{-1}(r(h)).
	\end{equation*}
	
\end{proof}
\vspace{1cm}
%%%%%%%%%%%%%%%%%%%%%%%%%%%%%%%%%%%%%%%%%%%%%%%%%%%%%%%%%%%

\noindent Before going to the proof of Proposition \ref{P1} and Proposition \ref{P2}, we define the following for our convenience:
$$Q_{ef}:=q_{ef}^{*}q_{ef}.$$ \vspace{0.1cm}

\noindent Moreover, we state some facts which will be used repeatedly in the proof of our propositions.\\

\noindent \textbf{(Fact 1)} $Q_{ef}=0 \Leftrightarrow q_{ef}=0.$\\ 

\noindent \textbf{(Fact 2)} Since $U^{t}=(q_{ef})_{e,f \in E(\Gamma)}^{t}$ is unitary, for each fixed $g$
\begin{align*}
	& ~~\sum_{k \in E(\Gamma)} q_{gk}^{*}q_{gk}=1  \\
	\Leftrightarrow & ~~ \sum_{k \in E(\Gamma)} Q_{gk}=1.
\end{align*}
\textbf{(Fact 3)} for each $g\in E(\Gamma)$
\begin{equation}
	Q_{gg}=1 \Leftrightarrow  Q_{gk}=0 \text{ for all } k \neq g  ~~~~~~~~~~~~[\text{using (Fact 2)}] \label{1} .  
\end{equation}
\vspace{0.1cm}
\textbf{(Fact 4)} If $q_{ef}=0$, then applying antipode $\kappa$ on both sides, one can conclude that $q_{fe}=0$.\\[0.2cm]

\begin{prop}
	\label{P1}
	Let $\Gamma \in \mathcal{G} $ such that  $ a_{11}=1 $, i.e. there exists a loop at vertex 1. Then $ Q_{\tau}^{Lin} \cong   \left(\mystar\limits_{|E(\Gamma)|} C(S^1),  \Delta\right)$.
\end{prop}\

\begin{proof}
	Note that $ {a_{11}=1} $, i.e. there exists a loop at vertex 1, by the hypothesis of the proposition.\\
	
	\noindent Our main idea for proving the proposition is the following:\\
	
	\noindent \textbf{\underline{Strategy:}} \textbf{ For each vertex $\mathbf{ u\in \{1,2,...n\} }$, we will show that $ \mathbf{q_{ee}^{*}q_{ee}=1 }$ for all $\mathbf{ e \in E(\Gamma)}$ with $ \mathbf{r(e)=u}$. This will be shown by using induction.} \\
	
	\noindent Let, 
	$\mathcal{S}$($u$): $ q_{ee}^{*}q_{ee}=1 $ for all $e \in E(\Gamma)$ such that $ r(e)=u $. Hence, our strategy is precisely to show that $\mathcal{S}$($u$) is true for all $ u\in \{1,2,...n\}$.\\
	
	\noindent Since a graph $\Gamma \in \mathcal{G} $ has no multiple edges, we denote the edge joining $i$ to $j$ by $e_{ij}$ i.e. $s(e_{ij})=i$ and $r(e_{ij})=j$.\\ 
	
	\noindent If $\Gamma\in \mathcal{G}$ with $E(\Gamma)=\{e_{11}\}$ then clearly $ Q_{\tau}^{Lin} \cong C(S^1)$. Therefore, we may assume that $\Gamma\in \mathcal{G}$ with $|E(\Gamma)|>1$. This implies $|V(\Gamma)|=n>1$. { \bf Now, our goal is to show that}  $\mathbf{\mathcal{S}(1)}$ \textbf{is true}.\\
	Since $\sum_{i=1}^{n} p_{i}=1$,
	\begin{align}
		S_{e_{11}}^{*}S_{e_{11}}+S_{e_{12}}^{*}S_{e_{12}}+S_{e_{23}}^{*}S_{e_{23}}+ \cdots +S_{e_{(n-1)n}}^{*}S_{e_{(n-1)n}}=1  \label{imp1.1}
	\end{align}
	\begin{align}
		\Rightarrow & \sum_{e_{kl}\in E(\Gamma)} S_{e_{kl}}^{*}S_{e_{kl}} \otimes (Q_{e_{kl}e_{11}}+Q_{e_{kl}e_{12}}+Q_{e_{kl}e_{23}}+ \cdots +Q_{e_{kl}e_{(n-1)n}})= 1\otimes 1 \\
		\Rightarrow & \sum_{u=1}^{n} p_{u} \otimes \left\lbrace  \sum_{r(e_{kl})=u}  (Q_{e_{kl}e_{11}}+Q_{e_{kl}e_{12}}+Q_{e_{kl}e_{23}}+ \cdots +Q_{e_{kl}e_{(n-1)n}})\right\rbrace = 1 \otimes 1 = \left( \sum_{u=1}^{n} p_{u} \right)  \otimes 1 \\
		\Rightarrow & \sum_{u=1}^{n} p_{u} \otimes \left\lbrace  \sum_{r(e_{kl})=u}  (Q_{e_{kl}e_{11}}+Q_{e_{kl}e_{12}}+Q_{e_{kl}e_{23}}+ \cdots +Q_{e_{kl}e_{(n-1)n}})-1 \right\rbrace = 0.
	\end{align}
	Since $ \{p_{u}\}_{u=1}^{n} $ is a linearly independent set, therefore for each $ u \in\{1,2,...n\} $
	\begin{equation}
		\sum_{r(e_{kl})=u}  (Q_{e_{kl}e_{11}}+Q_{e_{kl}e_{12}}+Q_{e_{kl}e_{23}}+\cdots +Q_{e_{kl}e_{(n-1)n}})=1   \label{imp1.2}
	\end{equation}

	For $u=1$,\\
	\begin{align*}
		&~~(Q_{e_{11}e_{11}}+Q_{e_{11}e_{12}}+Q_{e_{11}e_{23}}+ \cdots +Q_{e_{11}e_{(n-1)n}})= 1 = \sum_{e_{kl}\in E(\Gamma)} Q_{e_{11}e_{kl}} \\
		\Rightarrow &~~ Q_{e_{11}e_{kl}}=0 ~ \text{for all} ~ e_{kl}\in E(\Gamma) \text{ but }(k,l)\notin \{(1,1),(1,2),...,(n-1,n)\} \\
		\Rightarrow &~~ q_{e_{11}e_{kl}}=0 ~ \text{for all} ~ e_{kl}\in E(\Gamma) \text{ but }(k,l)\notin \{(1,1),(1,2),...,(n-1,n)\}.
	\end{align*}
	Now, it remains to show that $q_{e_{11}e_{kl}}=0 ~ \text{for all} ~ (k,l)\in \{(1,2),...,(n-1,n)\} $.\\
	To show this, we discuss the cases separately depending on the existence of a loop at vertex $n$.\\
	$ \bullet $  {\bf Assume  $\mathbf{a_{nn}=1}$, i.e.  there exists a loop at vertex $n$}.\\[0.1cm]
	We already have  $ q_{e_{11}e_{nn}}=0 $.\\
	Note that $e_{11}$ is a loop with $ r^{-1}(s(e_{11}))=\{e_{11}\} $ and $ e_{(n-1)n} \in E(\Gamma)$ such that $ s^{-1}(r(e_{(n-1)n})) \neq \emptyset $. Therefore by Lemma \ref{l3}, $ q_{e_{11}e_{(n-1)n}}=0 $ (as $ q_{e_{11}e_{nn}}=0 $).  \\
	
	\noindent $\bullet$ {\bf Assume $ \mathbf{a_{nn}=0} $, i.e. there does not exist any loop at vertex $n$}.\\[0.1cm]
	$e_{(n-1)n} \in E(\Gamma)$ with $s^{-1}(r(e_{(n-1)n})) = \emptyset $ and $ e_{11} \in E(\Gamma)$ such that $ s^{-1}(r(e_{11})) \neq \emptyset $. Therefore by Lemma \ref{l2}, $ q_{e_{(n-1)n}e_{11}}=0 $ 
	i.e. $q_{e_{11}e_{(n-1)n}}=0.$  \\
	
	Now, again $e_{(n-2)(n-1)}$ and $ e_{11} $ satisfy the hypothesis \textit{(H 3.1)} and \textit{(H 3.2)} respectively. Moreover, $q_{e_{11}f}=0 $ for all $f \in s^{-1}(r(e_{(n-2)(n-1)}))$. Therefore, $q_{e_{11}e_{(n-2)(n-1)}}=0 $. \\
	By repeated applications of Lemma \ref{l3}, we also get $ q_{e_{11}e_{kl}}=0 ~ \text{for all} ~ (k,l)\in \{(1,2),...,(n-1,n)\} $. Therefore, $ q_{e_{11}e_{kl}}=0 $ for all $ (k,l)\neq (1,1) $. In other words, $ q_{e_{11}e_{11}}^{*}q_{e_{11}e_{11}}=1 $. \\
	Hence, we can say that $ q_{ee}^{*}q_{ee}=1 $ for all $e \in E(\Gamma)$ with $ r(e)=1 $, i.e. 
	$\mathbf{\mathcal{S}(1)}$ \textbf{is true}.\\
	
	%%%%%%%%%%%%%%%%%%%%%%%%   end of proof q_e_{11}e_{11} unitary   %%%%%%%%%%%%%%%%%%%%%
	
	Let us assume that $\mathcal{S}(i)$ is true for all $ i \in \{1,2,...,(t-1)\}$.
	{\bf We will show that  $\mathcal{S}(t)$ is also true.}\\
	
	\textbf{(Case:1)} {\bf  Assume $ \mathbf{a_{tt}=1} $ i.e. there is a loop at vertex $t$. } \\
	We can rewrite Equation $ \eqref{imp1.1} $ just by replacing the $t^{th}$ term 
	\begin{align*}
		S_{e_{11}}^{*}S_{e_{11}}+S_{e_{12}}^{*}S_{e_{12}}+S_{e_{23}}^{*}S_{e_{23}}+ \cdots +S_{e_{(t-2)(t-1)}}^{*}S_{e_{(t-2)(t-1)}}+S_{e_{tt}}^{*}S_{e_{tt}}+ \cdots +S_{e_{(n-1)n}}^{*}S_{e_{(n-1)n}}=1
	\end{align*}
	and a similar computation will give us  
	\begin{align*}
		\sum_{r(e_{kl})=u}  (Q_{e_{kl}e_{11}}+Q_{e_{kl}e_{12}}+Q_{e_{kl}e_{23}}+\cdots +Q_{e_{kl}e_{(t-2)(t-1)}}+Q_{e_{kl}e_{tt}}+ \cdots + Q_{e_{kl}e_{(n-1)n}})=1 ~~~~\text{ for all }u \in \{1,2,...n\} 
	\end{align*}
	
	In particular, putting $u=t$;
	\begin{align*}
		& \sum_{r(e_{kl})=t}  (Q_{e_{kl}e_{11}}+Q_{e_{kl}e_{12}}+Q_{e_{kl}e_{23}}+\cdots +Q_{e_{kl}e_{(t-2)(t-1)}}+Q_{e_{kl}e_{tt}}+ \cdots + Q_{e_{kl}e_{(n-1)n}})=1\\
	\end{align*}
	
	Since by the induction hypothesis $ Q_{e_{kl}e_{11}}=Q_{e_{kl}e_{12}}=...=Q_{e_{kl}e_{(t-2)(t-1)}}=0 $ for all $e_{kl}\in E(\Gamma)$ such that $ r(e_{kl})=t $,
	
	\begin{align}
		\sum_{r(e_{kl})=t} (Q_{e_{kl}e_{tt}}+ \cdots + Q_{e_{kl}e_{(n-1)n}})=1  \label{imp1.3}
	\end{align}
	\begin{equation}
		\Rightarrow  Q_{e_{tt}e_{tt}}+Q_{e_{tt}e_{t(t+1)}}+ \cdots + Q_{e_{tt}e_{(n-1)n}}+\\
		\sum_{r(e_{kl})=t,(k,l)\neq (t,t)} (Q_{e_{kl}e_{tt}}+ \cdots + Q_{e_{kl}e_{(n-1)n}}) =1 \label{imp1.4}
	\end{equation}
	
	For a fix $ e_{kl} \in E(\Gamma)$ with  $r(e_{kl})=t$ such that $ (k,l)\neq (t,t) $, by the induction hypothesis $ q_{gg}^{*}q_{gg}=1 $ for all $ g \in r^{-1}(s(e_{kl})) $. This implies  $q_{ge_{(t-1)t}}=0$ for all $ g \in r^{-1}(s(e_{kl}))  $ [ using Equation $\eqref{1}$ ]. Therefore, by Lemma \ref{l1}, $ q_{e_{kl}e_{tt}}=q_{e_{kl}e_{t(t+1)}}=0.$ \\
	Similarly, for each $m \in \{t,(t+1),...,(n-1)\}$; $e_{m(m+1)} \in E(\Gamma) $ satisfies \textit{(H 1.1)} and also $e_{kl}$ satisfies \textit{(H 1.2)}. Therefore, by repeated applications of Lemma \ref{l1} (for each $m$),  $  q_{e_{kl}e_{(t+1)(t+2)}}=...= q_{e_{kl}e_{(n-1)n}}=0 $.	 \\
	Therefore,
	\begin{multline}
		q_{e_{kl}e_{tt}}=q_{e_{kl}e_{t(t+1)}}=...= q_{e_{kl}e_{(n-1)n}}=0 ~ \text{ for each } ~ e_{kl}\in E(\Gamma) ~ \text{ with } ~ r(e_{kl})=t ~ \text{ such that } ~ (k,l)\neq (t,t) \label{*}
	\end{multline}   
	
	These imply that $ Q _{e_{tt}e_{ij}}=0 $ for all $ (i,j) \notin \{(t,t),(t,t+1),...,(n-1,n)\} $ [ by Equation $\eqref{imp1.4}$ ].\\

	$\bullet$ {\bf Assume $ \mathbf{a_{nn}=1} $, i.e. there exists a loop at n.}\\
	From Equation $\eqref{*}$,  $ e_{tt} \in E(\Gamma)$ is a loop such that $ q_{ge_{(n-1)n}}=0 $ for all $ g \in r^{-1}(s(e_{tt}))-\{e_{tt}\} \neq \emptyset $ for $t>1$. 
	Therefore, \textit{(H 3.1)}, \textit{(H 3.2)} are satisfied by $e_{(n-1)n}$ and $e_{tt} \in E(\Gamma)$ respectively. Moreover, we have $q_{e_{tt}e_{nn}}=0$. This implies that $ q_{e_{tt}e_{(n-1)n}}=0 $ by Lemma \ref{l3}.\\

	$\bullet$ {\bf Assume $ \mathbf{a_{nn}=0} $, i.e. there does not exist any loop at n.}\\ 
	Observe that $ e_{tt} \in E(\Gamma)$ satisfies \textit{(H 2.1)} and $ e_{(n-1)n} \in E(\Gamma)$ satisfies \textit{(H 2.2)} because $ s^{-1}(n)= \emptyset $. Therefore by Lemma \ref{l2}, $ q_{e_{tt}e_{(n-1)n}}=0 $.\\
	
	For $t>1$, for all $ g \in r^{-1}(s(e_{tt}))-\{e_{tt}\} \neq \emptyset $, we have $q_{ge_{tt}}=q_{ge_{m(m+1)}}=0 $ for all $ m \in \{t,t+1,...,n-2\} $ [ by Equation \eqref{*} ]. Therefore, $ q_{e_{tt}e_{(n-1)n}}=0 \Rightarrow  q_{e_{tt}e_{(n-2)(n-1)}}=0 $. Similarly, at vertex $(n-3)$, we have already got that $ q_{e_{tt}f}=0$ for all $ f \in s^{-1}(n-3) $. Hence, again applying Lemma \ref{l3}, $ q_{e_{tt}e_{(n-3)(n-2)}}=0 $ and continue this process upto $ e_{t(t+1)} .$ \\
	
	Therefore, from the above discussions we get $ q _{e_{tt}e_{ij}}=0 $ for all $ (i,j) \in \{(t,t+1),...,(n-1,n)\} $\\
	Hence, $ q_{e_{tt}e_{kl}}=0 $ for all $ (k,l)\neq (t,t) $. In other words, $ q_{e_{tt}e_{tt}}^{*}q_{e_{tt}e_{tt}}=1 $. \\
	
	%%%%%%%%%%%%%%%%%%%%%%%%%%%%%%%%%%%%%%%%%%%%%%%%%%%%%%%%%%%%%%%%%%%%%%%%%%%%%%%%%%%%%%%%%%%%
	
	For $ j\neq t$,
	we can again rewrite Equation $\eqref{imp1.1}$ just by replacing $t^{th}$ term by $ S_{e_{jt}}^{*}S_{e_{jt}} $. So,
	\begin{align*}
		S_{e_{11}}^{*}S_{e_{11}}+S_{e_{12}}^{*}S_{e_{12}}+S_{e_{23}}^{*}S_{e_{23}}+ \cdots +S_{e_{(t-2)(t-1)}}^{*}S_{e_{(t-2)(t-1)}}+S_{e_{jt}}^{*}S_{e_{jt}}+ \cdots +S_{e_{(n-1)n}}^{*}S_{e_{(n-1)n}}=1
	\end{align*}
	and a similar computation will give us  
	\begin{align*}
		& \sum_{r(e_{kl})=t}  (Q_{e_{kl}e_{11}}+Q_{e_{kl}e_{12}}+Q_{e_{kl}e_{23}}+\cdots +Q_{e_{kl}e_{(t-2)(t-1)}}+Q_{e_{kl}e_{jt}}+ \cdots + Q_{e_{kl}e_{(n-1)n}})=1.\\
	\end{align*}
	
	\noindent Using the induction hypothesis same as before one can get that $ Q_{e_{kl}e_{11}}=Q_{e_{kl}e_{12}}=...=Q_{e_{kl}e_{(t-2)(t-1)}}=0 $ for all $e_{kl}\in E(\Gamma)$ such that $ r(e_{kl})=t $. Therefore for each $ j\neq t $ such that $e_{jt}\in E(\Gamma)$,
	
	\begin{align}
		\sum_{r(e_{kl})=t} (Q_{e_{kl}e_{jt}}+ \cdots + Q_{e_{kl}e_{(n-1)n}})=1  \label{imp1.5}
	\end{align}
	\begin{align*}
	\Rightarrow & ~~ (Q_{e_{tt}e_{jt}}+Q_{e_{tt}e_{t(t+1)}}+ \cdots + Q_{e_{tt}e_{(n-1)n}})+\\
	& ~~ (Q_{e_{jt}e_{jt}}+Q_{e_{jt}e_{t(t+1)}}+ \cdots + Q_{e_{jt}e_{(n-1)n}})+\\
	& ~~ \sum_{r(e_{kl})=t,~(k,l)\neq (j,t)(t,t)} (Q_{e_{kl}e_{jt}}+ \cdots + Q_{e_{kl}e_{(n-1)n}}) =1.
	\end{align*}
	
	As $ j\neq t$ and $Q_{e_{tt}e_{tt}}=1 $, $(Q_{e_{tt}e_{jt}}+Q_{e_{tt}e_{t(t+1)}}+ \cdots + Q_{e_{tt}e_{(n-1)n}})=0 $. Hence, we have arrived at an equation which is almost similar to Equation \eqref{imp1.4}. Now, arguing exactly like the previous one, one can show that all the other terms except $Q_{e_{jt}e_{jt}}$ of the above equation are 0. Therefore, $Q_{e_{jt}e_{jt}}=1 $ for all $ j \neq t $.
	We have already seen that $Q_{e_{tt}e_{tt}}=1 $.\\
	So, Combining these facts we can conclude that $\mathbf{\mathcal{S}(t)}$ \textbf{is also true}.\\
	
	%%%%%%%%%%%%%%%%%%%%%%%%%%%%%%%%%%%%%%%%%%%%%%%%%%%%%%%%%%%%%%%%%%%%%%%%%%%%%%%%%%%

	\textbf{(Case:2)}  {\bf Assume $ \mathbf{a_{tt}=0} $, i.e. there does not exist any loop at $t$.} \\
	Let, $m$ be such that $e_{mt} \in E(\Gamma)$.
	Rewriting Equation $\eqref{imp1.1}$ just replacing $t^{th}$ term by $ S_{e_{mt}}^{*}S_{e_{mt}} $. So,
	\begin{align*}
		S_{e_{11}}^{*}S_{e_{11}}+S_{e_{12}}^{*}S_{e_{12}}+S_{e_{23}}^{*}S_{e_{23}}+ \cdots +S_{e_{(t-2)(t-1)}}^{*}S_{e_{(t-2)(t-1)}}+S_{e_{mt}}^{*}S_{e_{mt}}+ \cdots +S_{e_{(n-1)n}}^{*}S_{e_{(n-1)n}}=1
	\end{align*}
	and a similar computation will give us  
	\begin{align*}
		& \sum_{r(e_{kl})=t}  (Q_{e_{kl}e_{11}}+Q_{e_{kl}e_{12}}+Q_{e_{kl}e_{23}}+\cdots +Q_{e_{kl}e_{(t-2)(t-1)}}+Q_{e_{kl}e_{mt}}+ \cdots + Q_{e_{kl}e_{(n-1)n}})=1\\
	\end{align*}
	
	\noindent and again using the induction hypothesis, one can get that $ Q_{e_{kl}e_{11}}=Q_{e_{kl}e_{12}}=...=Q_{e_{kl}e_{(t-2)(t-1)}}=0 $ for all $e_{kl}\in E(\Gamma)$ such that $ r(e_{kl})=t $. Therefore,
	\begin{align*}
		& \sum_{r(e_{kl})=t} (Q_{e_{kl}e_{mt}}+ \cdots + Q_{e_{kl}e_{(n-1)n}})=1\\
		\Rightarrow & (Q_{e_{mt}e_{mt}}+Q_{e_{mt}e_{t(t+1)}}+ \cdots + Q_{e_{mt}e_{(n-1)n}})+ \sum_{r(e_{kl})=t,~(k,l)\neq (m,t)} (Q_{e_{kl}e_{mt}}+ \cdots + Q_{e_{kl}e_{(n-1)n}}) =1.           
	\end{align*}
	
	Now arguing exactly the same as before, after Equation \eqref{imp1.4} in \textbf{(Case:1)}, we can show that all the other terms except $Q_{e_{mt}e_{mt}}$ of the above equation are 0. Therefore, $Q_{e_{mt}e_{mt}}=1 $ for each $m$ such that $e_{mt} \in E(\Gamma)$.
	Therefore, in this case also  $\mathbf{\mathcal{S}(t)}$ \textbf{is true}.
	Hence, we are done.
\end{proof}

\begin{prop}
	\label{P2}
	Let $\Gamma \in \mathcal{G} $ such that  $ a_{11}=0 $, i.e. there does not exist any loop at vertex 1. Then $ Q_{\tau}^{Lin} \cong  \left(\mystar\limits_{|E(\Gamma)|} C(S^1),  \Delta \right)$.\\
\end{prop}
\begin{proof}
	By the hypothesis of the proposition, observe that  $ {a_{11}=0} $, i.e. there does not exist any loop at vertex 1. \\
	Firstly, if $\Gamma\in \mathcal{G}$ with $|E(\Gamma)|=1$ then $|V(\Gamma)|=2$ and $E(\Gamma)=\{e_{12}\}$. In this case, it is trivial to observe that $Q_{\tau}^{Lin} \cong  C(S^1)$. So, again we may assume that $|E(\Gamma)|>1$.\\
	From now on the strategy of the proof is exactly the same as what we have used in Proposition \ref{P1}.\\
	
	\noindent Now, for any $ e_{1x} \in E(\Gamma) $ satisfies \textit{(H 1.2)} [ as $ r^{-1}(1)= \emptyset $ ] and every $e_{uv} \in E(\Gamma)$ with $ v \neq n $ satisfies \textit{(H 1.1)}. Therefore, using Lemma \ref{l1} repeatedly, we can conclude that $ q_{e_{1x}e_{kl}}=0 $ for all $ e_{kl} \in E(\Gamma) $ such that $k \neq 1$.\\
	
	Since $$ \sum_{i=1}^{n} p_{i}=1 $$
	\begin{align}
		\Rightarrow  \left( \sum_{e_{1x}\in E(\Gamma)}S_{e_{1x}}S_{e_{1x}}^{*} \right) +S_{e_{12}}^{*}S_{e_{12}}+S_{e_{23}}^{*}S_{e_{23}}+ \cdots +S_{e_{(n-1)n}}^{*}S_{e_{(n-1)n}}=1 \label{imp2.1}
	\end{align}
	
	Applying $\alpha$ on both sides of the above equation, we get
	\begin{equation}
		\sum_{e_{1x}\in E(\Gamma)}\left( \sum_{e_{ij},e_{kl}\in E(\Gamma)} S_{e_{ij}}S_{e_{kl}}^{*}\otimes q_{e_{ij}e_{1x}}q_{e_{kl}e_{1x}}^{*} \right)
		+ \sum_{e_{kl}\in E(\Gamma)} S_{e_{kl}}^{*}S_{e_{kl}} \otimes (Q_{e_{kl}e_{12}}+Q_{e_{kl}e_{23}}+ \cdots +Q_{e_{kl}e_{(n-1)n}})= 1\otimes 1  \label{imp}
	\end{equation}
	
	Now, we focus on the term $ \sum_{e_{1x}\in E(\Gamma)}\left( \sum_{e_{ij},e_{kl}\in E(\Gamma)} S_{e_{ij}}S_{e_{kl}}^{*}\otimes q_{e_{ij}e_{1x}}q_{e_{kl}e_{1x}}^{*} \right)  $.\\
	
	Since $ q_{e_{1x}e_{kl}}=0 $ for all $ e_{kl} \in E(\Gamma) $ such that $k \neq 1$, $ q_{e_{ij}e_{1x}}q_{e_{kl}e_{1x}}^{*}=0 $ whenever at least one of $ i,k \neq 1 $. It means if $ q_{e_{ij}e_{1x}}q_{e_{kl}e_{1x}}^{*} \neq 0 $ then it must be of the form $ q_{e_{1j}e_{1x}}q_{e_{1l}e_{1x}}^{*} $.\\
	Moreover,  $ S_{e_{ij}}S_{e_{kl}}^{*} \neq 0 \Leftrightarrow r(e_{ij})=r(e_{kl}) \Leftrightarrow j=l $. Therefore, if $ S_{e_{ij}}S_{e_{kl}}^{*}\otimes q_{e_{ij}e_{1x}}q_{e_{kl}e_{1x}}^{*} \neq 0 $ then it must be of form $ S_{e_{1j}}S_{e_{1j}}^{*}\otimes q_{e_{1j}e_{1x}}q_{e_{1j}e_{1x}}^{*} $. Hence,
	
	\begin{equation*}
		\sum_{e_{1x},e_{ij},e_{kl}\in E(\Gamma)} S_{e_{ij}}S_{e_{kl}}^{*}\otimes q_{e_{ij}e_{1x}}q_{e_{kl}e_{1x}}^{*}= \sum_{e_{1j}\in E(\Gamma)}S_{e_{1j}}S_{e_{1j}}^{*}\otimes \left(  \sum_{e_{1x}\in E(\Gamma)}q_{e_{1j}e_{1x}}q_{e_{1j}e_{1x}}^{*}\right). 
	\end{equation*}
	
	This together with Equation $ \eqref{imp} $ implies
	
	\begin{multline*}
	\sum_{e_{1j}\in E(\Gamma)}S_{e_{1j}}S_{e_{1j}}^{*}\otimes \left(  \sum_{e_{1x}\in E(\Gamma)}q_{e_{1j}e_{1x}}q_{e_{1j}e_{1x}}^{*}\right)+ \\    
	\sum_{e_{kl}\in E(\Gamma)} S_{e_{kl}}^{*}S_{e_{kl}} \otimes (Q_{e_{kl}e_{12}}+Q_{e_{kl}e_{23}}+ \cdots +Q_{e_{kl}e_{(n-1)n}})= 1\otimes 1 
	\end{multline*}
	\begin{multline*}
	\Rightarrow  \sum_{e_{1j}\in E(\Gamma)}S_{e_{1j}}S_{e_{1j}}^{*}\otimes \left(  \sum_{e_{1x}\in E(\Gamma)}q_{e_{1j}e_{1x}}q_{e_{1j}e_{1x}}^{*}\right) \\
	+  \sum_{u=2}^{n} p_{u} \otimes \left\lbrace  \sum_{r(e_{kl})=u}  (Q_{e_{kl}e_{12}}+Q_{e_{kl}e_{23}}+ \cdots +Q_{e_{kl}e_{(n-1)n}})\right\rbrace = \left( \sum_{u=1}^{n} p_{u} \right)  \otimes 1 \\
	\end{multline*}           
	\begin{multline*}
	\Rightarrow  \sum_{e_{1j}\in E(\Gamma)}S_{e_{1j}}S_{e_{1j}}^{*}\otimes \left(  \sum_{e_{1x}\in E(\Gamma)}q_{e_{1j}e_{1x}}q_{e_{1j}e_{1x}}^{*}\right) \\
	+  \sum_{u=2}^{n} p_{u} \otimes \left\lbrace  \sum_{r(e_{kl})=u}  (Q_{e_{kl}e_{12}}+Q_{e_{kl}e_{23}}+ \cdots +Q_{e_{kl}e_{(n-1)n}})-1\right\rbrace = p_{1}  \otimes 1 \\
	\end{multline*}
	Since $p_{1}=\sum_{e_{1j}\in E(\Gamma)}S_{e_{1j}}S_{e_{1j}}^{*}$,
	\begin{multline*}
	\sum_{e_{1j}\in E(\Gamma)}S_{e_{1j}}S_{e_{1j}}^{*}\otimes \left(  \sum_{e_{1x}\in E(\Gamma)}q_{e_{1j}e_{1x}}q_{e_{1j}e_{1x}}^{*} -1 \right) \\
	+  \sum_{u=2}^{n} p_{u} \otimes \left\lbrace  \sum_{r(e_{kl})=u}  (Q_{e_{kl}e_{12}}+Q_{e_{kl}e_{23}}+ \cdots +Q_{e_{kl}e_{(n-1)n}})-1\right\rbrace = 0 \\
	\end{multline*}
	For each fix $ i \in \{2,3,...n\} $, multiplying both sides of the above equation by $p_{i}\otimes 1 $ from left and using $ p_{i}=S_{e_{(i-1)i}}^{*}S_{e_{(i-1)i}} $, we get 
	\begin{multline*}
	\sum_{e_{1j}\in E(\Gamma)} S_{e_{(i-1)i}}^{*}S_{e_{(i-1)i}}S_{e_{1j}}S_{e_{1j}}^{*}\otimes \left(  \sum_{e_{1x}\in E(\Gamma)}q_{e_{1j}e_{1x}}q_{e_{1j}e_{1x}}^{*} -1 \right) \\
	+  \sum_{u=2}^{n}p_{i} p_{u} \otimes \left\lbrace  \sum_{r(e_{kl})=u}  (Q_{e_{kl}e_{12}}+Q_{e_{kl}e_{23}}+ \cdots +Q_{e_{kl}e_{(n-1)n}})-1\right\rbrace = 0 \\
	\end{multline*}	
	
	Since $ i \neq 1, S_{e_{(i-1)i}}S_{e_{1j}}=0 $ for all $j$ and orthogonality of $ \{p_{i}\}_{i=2,...,n} $ implies for each $u \in \{2,...,n\}$
	
	\begin{equation}
	\sum_{r(e_{kl})=u}  (Q_{e_{kl}e_{12}}+Q_{e_{kl}e_{23}}+ \cdots +Q_{e_{kl}e_{(n-1)n}})=1 \label{imp2.2}
	\end{equation}
	Now, the strategy is exactly the same as what we have done in Proposition \ref{P1}.
	%%%%%%%%%%%%%%%%%%%%%%%%%%%%%%%  case 2  induction %%%%%%%%%%%%%%%%%%%%%%%%%%%%%%%
	Note that $r^{-1}(1)= \emptyset$ (as $ \Gamma \in \mathcal{G} $ with $a_{11}=0$). So, there is nothing to do with $\mathcal{S}(1)$. Thus,{ \bf  we will show that $\mathcal{S}(2)$ is true.}\\
	For each $y$ such that $ e_{y2}\in E(\Gamma)$ (i.e. the possible values of $y$ are 1 and 2), we can modify Equation $ \eqref{imp2.1} $ just by putting the value of $p_{2}$ as $S_{e_{y2}}^{*}S_{e_{y2}}$, we get
	\begin{align*}
		\left( \sum_{e_{1k}\in E(\Gamma)}S_{e_{1k}}S_{e_{1k}}^{*} \right) +S_{e_{y2}}^{*}S_{e_{y2}}+S_{e_{23}}^{*}S_{e_{23}}+ \cdots  +S_{e_{(n-1)n}}^{*}S_{e_{(n-1)n}}=1 
	\end{align*}
	and similarly, one can derive that
	\begin{equation}
		\sum_{r(e_{kl})=u} (Q_{e_{kl}e_{y2}}+Q_{e_{kl}e_{23}}+ \cdots +Q_{e_{kl}e_{(n-1)n}})=1 \label{e2,u=2} \\
	\end{equation}
	
	Now, observe that $e_{12} \in E(\Gamma)$ such that $r^{-1}(s(e_{12}))= \emptyset $ and $e_{m(m+1)} \in E(\Gamma)$ with $s^{-1}(r(e_{m(m+1)}))= s^{-1}(m+1) \neq \emptyset $ for all $m=1,2,...,(n-1)$. Hence, by Lemma \ref{l1}, $q_{e_{12}e_{kl}}=0$ for all $k \in \{2,3,...,n\}$. In particular, $q_{e_{12}e_{23}}=q_{e_{12}e_{34}}=...=q_{e_{12}e_{(n-1)n}}=0$. Moreover, note that $q_{e_{12}e_{22}}=0$ if $e_{22} \in E(\Gamma)$.\\
	
	\noindent Now, there will be two cases depending on the existence of a loop at vertex 2.\\
	{\bf (Case: 1) Assume $ \mathbf{a_{22}=1} $, i.e. there exists a loop at vertex 2.}\\
	Firstly, by putting $u=2$ and $y=2$ in Equation \eqref{e2,u=2} we have
	\begin{equation*}
		(Q_{e_{12}e_{22}}+Q_{e_{12}e_{23}}+ \cdots +Q_{e_{12}e_{(n-1)n}})+
		(Q_{e_{22}e_{22}}+Q_{e_{22}e_{23}}+ \cdots +Q_{e_{22}e_{(n-1)n}})=1. 
	\end{equation*}
	Since $Q_{e_{12}e_{22}}=Q_{e_{12}e_{23}}= \cdots =Q_{e_{12}e_{(n-1)n}}=0$, the above equation reduces to 
	\begin{equation*}
		(Q_{e_{22}e_{22}}+Q_{e_{22}e_{23}}+ \cdots +Q_{e_{22}e_{(n-1)n}})=1.
	\end{equation*}
	Therefore, $ Q_{e_{22}e_{ij}}=0$ for all $(i,j) \notin \{(2,2),(2,3),...,(n-1,n)\}$ [ using \textbf{(Fact 2)} ]. Now, our target is to show that $ Q_{e_{22}e_{23}}= \cdots =Q_{e_{22}e_{(n-1)n}}=0$. The arguments are exactly same as what we did after Equation \eqref{*} onwards in Proposition \ref{P1}. Hence, from the above equation, one will be able to show that $q_{e_{22}e_{kl}}=0$ for all $(k,l) \neq (2,2)$. Hence, $Q_{e_{22}e_{22}}=1$. \\
	Secondly, by putting $u=2$ and $y=1$ in Equation \eqref{e2,u=2} we have 
	\begin{equation*}
		(Q_{e_{12}e_{12}}+Q_{e_{12}e_{23}}+ \cdots +Q_{e_{12}e_{(n-1)n}})+
		(Q_{e_{22}e_{12}}+Q_{e_{22}e_{23}}+ \cdots +Q_{e_{22}e_{(n-1)n}})=1 .
	\end{equation*}
	We have already shown that all the terms except $Q_{e_{12}e_{12}}$ of the above equation are 0. Hence, $Q_{e_{12}e_{12}}=1$.\\
	
	\noindent {\bf (Case: 2) Assume $ \mathbf{a_{22}=0} $, i.e. there does not exist any loop at vertex 2.}\\
	In this case, Equation \eqref{e2,u=2} with $u=2$ will be in the form   
	\begin{equation*}
		(Q_{e_{12}e_{12}}+Q_{e_{12}e_{23}}+ \cdots +Q_{e_{12}e_{(n-1)n}})=1 .
	\end{equation*}
	Since $ q_{e_{12}e_{23}}=...=q_{e_{12}e_{(n-1)n}}=0 $, clearly $ Q_{e_{12}e_{12}}=1$.\\
	Combining these two cases, {\bf  $\mathcal{S}(2)$ is true.}\\  
	
	\noindent Let by the induction hypothesis again $\mathcal{S}(i)$ is true for $i=1,2,...,(t-1).$ Now, {\bf we will show that $\mathcal{S}(t)$ is also true.}\\
	
	\noindent Again, we have divided the proof into two cases depending on the existence of a loop at vertex $t$.\\
	\noindent  {\bf (Case: 1) Assume $ \mathbf{a_{tt}=1} $, i.e. there exists a loop at t.}\\
	For each $y$ such that $ e_{yt}\in E(\Gamma)$, we can modify Equation $ \eqref{imp2.1} $ just by putting the value of $p_{t}$ as $S_{e_{yt}}^{*}S_{e_{yt}}$, we get
	\begin{align*}
		\left( \sum_{e_{1k}\in E(\Gamma)}S_{e_{1k}}S_{e_{1k}}^{*} \right) +S_{e_{12}}^{*}S_{e_{12}}+S_{e_{23}}^{*}S_{e_{23}}+ \cdots +S_{e_{yt}}^{*}S_{e_{yt}}+ \cdots +S_{e_{(n-1)n}}^{*}S_{e_{(n-1)n}}=1 
	\end{align*}
	and a similar computation will give us
	\begin{equation*}
		\sum_{r(e_{kl})=u} (Q_{e_{kl}e_{12}}+Q_{e_{kl}e_{23}}+ \cdots + Q_{e_{kl}e_{yt}}+ \cdots +Q_{e_{kl}e_{(n-1)n}})=1. 
	\end{equation*}
	Put $u=t$ on the above equation.
	
	\begin{equation*}
		\sum_{r(e_{kl})=t} (Q_{e_{kl}e_{12}}+Q_{e_{kl}e_{23}}+ \cdots +Q_{e_{kl}e_{yt}}+ \cdots +Q_{e_{kl}e_{(n-1)n}})=1. 
	\end{equation*}
	By the induction hypothesis, $ Q_{e_{kl}e_{12}}=Q_{e_{kl}e_{23}}=...=Q_{e_{kl}e_{(t-2)(t-1)}}=0 $. Therefore for each $e_{yt} \in E(\Gamma)$, 
	\begin{equation}
		\sum_{r(e_{kl})=t} (Q_{e_{kl}e_{yt}}+ Q_{e_{kl}e_{t(t+1)}}+\cdots +Q_{e_{kl}e_{(n-1)n}})=1   \label{imp2.3}
	\end{equation}
	
	Now, we have arrived at the same situation as in Proposition \ref{P1}. Observe that Equation $ \eqref{imp2.3} $ with $y=t$ and Equation   $\eqref{imp2.3}$ for $y=j \neq t $ are exactly the same as Equations $ \eqref{imp1.3} $ and $\eqref{imp1.5}$ respectively. So arguing these two cases successively just like the previous one, we will be able to show that $Q_{e_{mt}e_{mt}}=1$ for each $ m $ such that $e_{mt} \in E(\Gamma)$.\\
	Therefore, {\bf $\mathcal{S}(t)$ is true for this case.}\\

	\noindent \textbf{(Case: 2)  Assume $ \mathbf{a_{tt}=0 }$, i.e. if there does not exist any loop at $\mathbf{t}$. }\\
	Proof is similar as Proposition \ref{P1}, \textbf{(Case 2)}. We just need to start with the equation
	\begin{align*}
		\left( \sum_{e_{1k}\in E(\Gamma)}S_{e_{1k}}S_{e_{1k}}^{*} \right) +S_{e_{12}}^{*}S_{e_{12}}+S_{e_{23}}^{*}S_{e_{23}}+ \cdots +S_{e_{mt}}^{*}S_{e_{mt}} + \cdots +S_{e_{(n-1)n}}^{*}S_{e_{(n-1)n}}=1 
	\end{align*}
	where $ m \in \{1,2,...,n\} $ be such that $ e_{mt} \in E(\Gamma) $.\\
	
	By a similar computation, we will reach at 
	\begin{equation*}
		\sum_{r(e_{kl})=t} (Q_{e_{kl}e_{mt}}+ Q_{e_{kl}e_{t(t+1)}}+\cdots +Q_{e_{kl}e_{(n-1)n}})=1   
	\end{equation*}
	and the rest of the proof is again similar to the arguments that we have used after Equation \eqref{imp1.4} and using that one can show that  $Q_{e_{mt}e_{mt}}=1$ for all $m$  with $ e_{mt} \in E(\Gamma) $.
	Hence, $\mathbf{\mathcal{S}(t)}$ \textbf{is also true}  for this case. Therefore, again we are done. \\ 
\end{proof}

\noindent Finally, we conclude our main theorem just by combining the above two propositions.
\begin{thm}
	\label{T1}
	Let $\Gamma=\{V(\Gamma), E(\Gamma), r, s\}$ be a finite, connected, directed graph satisfying the following graph theoretic properties : \textbf{(R1)} there does not exist any cycle of length $\geq$ 2, \textbf{(R2)} there exists a path of length $(|V(\Gamma)|-1)$ which consists all the vertices and \textbf{(R3)} given any two vertices (may not be distinct) there exists at most one edge joining them, then $(Q_{\tau}^{Lin}, \Delta)$  is CQG isomorphic to $\left( \mystar\limits_{|E(\Gamma)|} C(S^1),  \Delta\right) $,\\
	In other words, for any $\Gamma \in \mathcal{G} $, $ Q_{\tau}^{Lin} \cong $ $\left(\mystar\limits_{|E(\Gamma)|} C(S^1),  \Delta\right)$.
\end{thm} 
\begin{proof}
	Since $\Gamma \in \mathcal{G}$, we can divide the proof into two cases:  (i) $a_{11}=1$ and (ii)  $a_{11}=0$. In both cases, we are done because of  Proposition \ref{P1} and Proposition \ref{P2} respectively.
	
\end{proof}
\section{Counter examples}
In this section, we cook up some counter examples which help us to understand that this class of graphs is really significant. If we deviate slightly from any of the assumptions in \textbf{(R)}, i.e. we move away from the desired form of matrices that we have got in section 3, then quantum symmetry may not be isomorphic to $\left(\mystar\limits_{|E(\Gamma)|} C(S^1),  \Delta\right)$ with respect to the category  $\mathfrak{C}_{\tau}^{Lin}$. \\

(1) Let $\mathcal{K}_{2}$ be a complete graph with 2 vertices i.e. the adjacency matrix is of the form 
$\begin{bmatrix}
0 & 1 \\
1 & 0 \\
\end{bmatrix} $. Then the universal object $Q_{\tau}^{Lin}$ for $C^*(\mathcal{K}_{2})$ is isomorphic to $ (\mathcal{D}_{\varphi}(C(S^1)* C(S^1)),\Delta_{\varphi})$, where $\varphi$ is an order two automorphism such that $\varphi(z_{1})=z_{2}$ and $\varphi(z_{2})=z_{1}$ for two canonical generators $z_{1}, z_{2}$ of $C(S^1) * C(S^1)$ (for details see Theorem 5.4 from \cite{Mandal}). Observe that the adjacency matrix is not an upper triangular matrix (since the graph itself is a cycle of length 2, i.e. \textbf{(R1) has been violated}) and also the universal object $Q_{\tau}^{Lin}$ is not isomorphic to $C(S^{1})*C(S^{1})$. \\

%%%%%%%%%%%%%%%%%%%%%%%%%%%%%%%%%%%%%%%%%%%%%%%%%%%%%%%%%%%%%%%%%%%%%%%%%%%%%%%%%%%%%%%%%%%%%%%%%%%
\begin{center}
	\begin{figure}[htpb] \label{K2}
		\begin{tikzpicture}
		\draw[fill=black] (0,0) circle (2pt) node[anchor=east]{$v_{1}$};
		\draw[fill=black] (2,0) circle (2pt) node[anchor=west]{$v_{2}$};
		
		\draw[black,thick](0,0) edge[bend left= 60] node{\rmidarrow} node[above]{$e_{12}$} (2,0);
		\draw[black,thick](2,0) edge[bend left= 60] node{\lmidarrow} node[below]{$e_{21}$} (0,0);
		
		\end{tikzpicture}
		\caption{$\mathcal{K}_{2}$}
	\end{figure}
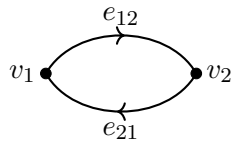
\end{center}
%%%%%%%%%%%%%%%%%%%%%%%%%%%%%%%%%%%%%%%%%%%%%%%%%%%%%%%%%%%%%%%%%%%%%%%%%%%%%%%%%%%%%%%%%%%%%%%%%%%

(2) Let $L_{1,1}$ be a graph whose adjacency matrix is given by $Id_{2 \times 2}$. Then the universal object $Q_{\tau}^{Lin}$ for  $C^*(L_{1,1})$ is isomorphic to $H_{2}^{\infty+}$ (see proposition 4.2 from \cite{Mandalkms}). In this case also the graph does not belong to $\mathcal{G}$, since \textbf{(R2) is violated} (i.e. super diagonal entry of the adjacency matrix of $L_{1,1}$ is 0). Note that quantum symmetry $Q_{\tau}^{Lin}$ is not isomorphic to $C(S^{1})*C(S^{1})$. \\

%%%%%%%%%%%%%%%%%%%%%%%%%%%%%%%%%%%%%%%%%%%%%%%%%%%%%%%%%%%%%%%%%%%%%%%%%%%%%%%%%%%%%%%%%%%%%%%%%%%
\begin{center}
	\begin{figure}[htpb] \label{L11}
		\begin{tikzpicture}
		\draw[fill=black] (0,0) circle (2pt) node[anchor=south]{$v_{1}$};
		\draw[fill=black] (2,0) circle (2pt) node[anchor=south]{$v_{2}$};

		\begin{scope}[decoration={markings,
			mark=at position 0.25 with {\arrow[black,thick]{<}},
		}]
		\draw[black,postaction={decorate}, thick](0,0.5) circle (0.5)  node[above=0.6]{$e_{11}$};
		\end{scope}

		\begin{scope}[decoration={markings,
			mark=at position 0.25 with {\arrow[black,thick]{<}},
		}]
		\draw[black,postaction={decorate}, thick](2,0.5) circle (0.5)  node[above=0.6]{$e_{22}$};
		\end{scope}
		
		\end{tikzpicture}
		\caption{$L_{1,1}$}
	\end{figure}
\end{center}
%%%%%%%%%%%%%%%%%%%%%%%%%%%%%%%%%%%%%%%%%%%%%%%%%%%%%%%%%%%%%%%%%%%%%%%%%%%%%%%%%%%%%%%%%%%%%%%%%%%

(3) Let $\Gamma$ be a finite, connected, directed graph whose adjacency matrix contains a natural number other than 0 and 1 (i.e. \textbf{(R3) is being dropped}). Assume that $E(\Gamma)=\{e_{i}:i=1,2,...,|E(\Gamma)|\}$ and $\{z_{i}:i=1,2,...,|E(\Gamma)|\}$ are the unitary operators which generate the universal $C^*$-algebra $\left(\mystar\limits_{|E(\Gamma)|} C(S^1),  \Delta \right)$ . Let $a_{ij}=n(v_{i},v_{j})>1 $ and without loss of generality $e_{1},e_{2} \in \{e \in E(\Gamma): s(e)=v_{i}, r(e)=v_{j}\}$. Then $\left( \mathcal{D}_{\varphi}\left(\mystar\limits_{|E(\Gamma)|} C(S^1)\right),\Delta_{\varphi}\right)$ always acts on $C^*(\Gamma)$ with respect to the automorphism $\varphi(z_{1})=z_{2}$, $\varphi(z_{2})=z_{1}$ and $ \varphi(z_{k})=z_{k}$ for $k \neq 1, 2 $ . Moreover, a faithful $\tau$ preserving action $\alpha:C^*(\Gamma) \to C^*(\Gamma) \otimes \left( \mathcal{D}_{\varphi}\left(\mystar\limits_{|E(\Gamma)|} C(S^1)\right),\Delta_{\varphi}\right)  $ is given by 
\begin{align}
	\alpha(S_{e_1}) &= S_{e_1} \otimes (z_{1},0)+S_{e_2} \otimes (0,z_{2}),\\
	\alpha(S_{e_2}) &= S_{e_2} \otimes (z_{2},0)+S_{e_1} \otimes (0,z_{1}),\\
	\alpha(S_{e_k}) &= S_{e_k} \otimes (z_{k},z_{k}) ~~~~ (\text{for }  k \neq 1,2 ).
\end{align}

Quantum SO(3) $(C(SO_{q}(3)))$, odd dimensional quantum real projective space $C(\mathbb{R}P_{q}^{2n-1})$ for $n>1$ and even dimensional quantum real projective space $C(\mathbb{R}P_{q}^{2n})$ for $n \geq 1$ can be viewed as graph $C^*$-algebra for $q \in [0,1)$.
More precisely, for $q \in [0,1)$, $C(\mathbb{R}P_{q}^{2n-1})$ (respectively $C(\mathbb{R}P_{q}^{2n}$)) is isomorphic to $C^*(L_{2n-1}^{(2)})$ (respectively $C^*(L_{2n}')$). Explicit descriptions of $C^*(L_{2n-1}^{(2)})$ and $C^*(L_{2n}')$ can be found in section 0 (Introduction), 4.2 and 5.2 of \cite{HSproj}. In particular, $C(SO_{q}(3)) \cong C(\mathbb{R}P_{q}^{3}) \cong C^*(L_{3}^{(2)}).$ From the discussion in the previous paragraph, one can conclude that the quantum symmetry $Q_{\tau}^{Lin}$ can not be isomorphic to $\left(\mystar\limits_{|E(\Gamma)|} C(S^1),  \Delta\right)$corresponding to these graphs mentioned above. \\
%%%%%%%%%%%%%%%%%%%%%%%%%%%%%%%%%%%%%%%%%%%%%%%%%%%%%%%%%%%%%%%%%%%%%%%%%%%%%%%%%%%%%%%%%%%%%%%%%%%
\begin{figure}[htpb] \label{L2'}
	
	\begin{tikzpicture}
	\draw[fill=black] (0,0) circle (2pt) node[anchor=east]{$v_{1}$};
	\draw[fill=black] (2,0) circle (2pt) node[anchor=west]{$v_{2}$};
	
	\draw[black,thick](-0.5,0) circle (-0.5) node[above=0.25]{\rmidarrow};
	\draw[black,thick](0,0) edge[bend left= 60] node{\rmidarrow} node[above=0.1]{$e_{1}$} (2,0);
	\draw[black,thick](2,0) edge[bend left= 60] node{\rmidarrow} node[below=0.1]{$e_{2}$} (0,0);
	
	\end{tikzpicture}
	\caption{$L_{2}'$}
\end{figure}

\begin{figure}[htpb]  \label{L3(2)}
	\begin{tikzpicture}
	\draw[fill=black] (0,0) circle (2pt) node[anchor=south]{$v_{1}$};
	\draw[fill=black] (2,0) circle (2pt) node[anchor=south]{$v_{2}$};
	
	\draw[black,thick](0,0.5) circle (0.5) node[above=0.25]{\rmidarrow};
	\draw[black,thick](2,0.5) circle (0.5) node[above=0.25]{\rmidarrow} ;
	
	\draw[black,thick] (0,0)-- node{\rmidarrow}  node[above]{$e_{1}$} (2,0);
	
	\draw[black,thick] (0,0) edge[bend left= -45] node{\rmidarrow}  node[below]{$e_{2}$} (2,0);
	
	\end{tikzpicture}
	\caption{$L_{3}^{(2)}$}  
	
\end{figure}
%%%%%%%%%%%%%%%%%%%%%%%%%%%%%%%%%%%%%%%%%%%%%%%%%%%%%%%%%%%%%%%%%%%%%%%%%%%%%%%%%%%%%%%%%%%%%%%%%%%

Moreover, converse of Theorem \ref{T1} is not true i.e. the quantum symmetry $Q_{\tau}^{Lin}$ for a graph $C^*$-algebra $C^*(\Gamma) $ isomorphic to $\left(\mystar\limits_{|E(\Gamma)|} C(S^1),  \Delta\right)$ does not imply the underlying graph must be in $\mathcal{G}$. More precisely, \textbf{(R1)}, \textbf{(R2)} are not necessary but \textbf{(R3)} is necessary for Theorem \ref{T1}.  \\

\begin{enumerate}
	\item[(A)] The condition \textbf{(R1)} is not  necessary. Consider the graph $\Gamma_{0}$ whose adjacency matrix is given by
	$ \begin{bmatrix}
	0&1&0\\
	0&0&1\\
	0&1&0\\
	\end{bmatrix} $. Then the quantum symmetry $Q_{\tau}^{Lin}$ generated by the matrix entries $q_{ij}$'s of $(q_{ij})_{3 \times 3}$ for $ C^*(\Gamma_{0})$  is isomorphic to $ (C(S^{1})*C(S^{1})*C(S^{1}),\Delta)$ but $\Gamma_{0} \notin \mathcal{G}$. \\
	
	%%%%%%%%%%%%%%%%%%%%%%%%%%%%%%%%%%%%%%%%%%%%%%%%%%%%%%%%%%%%%%%%%%%%%%%%%%%%%%%%%%%%%%%%%%%%%%%%%%%
	\begin{center}
		\begin{figure}[htpb]  \label{Gamma0}
			\begin{tikzpicture}
			\draw[fill=black] (-2,0) circle (2pt) node[anchor=east]{$v_{1}$};
			\draw[fill=black] (0,0) circle (2pt) node[anchor=west]{$v_{2}$};
			\draw[fill=black] (2,0) circle (2pt) node[anchor=west]{$v_{3}$};
			
			\draw[black,thick](-2,0) --node{\rmidarrow} node[above]{$e_{12}$} (0,0);
			\draw[black,thick](0,0) edge[bend left= 60] node{\rmidarrow} node[above]{$e_{23}$} (2,0);
			\draw[black,thick](2,0) edge[bend left= 60] node{\lmidarrow} node[below]{$e_{32}$} (0,0);
			
			\end{tikzpicture}
			\caption{$\Gamma_{0}$}
		\end{figure}
	\end{center}
	For instance, $r^{-1}(s(e_{12})) = \emptyset $ and $r(e_{12})=s(e_{23})$. Therefore by Lemma \ref{l1}, $q_{e_{12}e_{23}}=0$. Applying the antipode $\kappa$ on both sides,  we get $q_{e_{23}e_{12}}=0$.  Again, $r(e_{23})=s(e_{32})$ implies $q_{e_{12}e_{32}}=0$. 
	For our convenience, we define $Q_{ef}=q_{ef}^{*}q_{ef}$. Since 
	\begin{align*}
		& ~ S_{e_{12}}^{*}S_{e_{12}}=S_{e_{32}}^{*}S_{e_{32}} \\
		\Rightarrow  &  \sum_{f \in E(\Gamma)}S_{f}^*S_{f}\otimes( Q_{f e_{12}}-Q_{f e_{32}})=0 \\
		\Rightarrow & ~ p_{v_2} \otimes ( Q_{e_{12} e_{12}}-Q_{e_{12} e_{32}}+ Q_{e_{32} e_{12}}-Q_{e_{32} e_{32}})+ p_{v_3} \otimes ( Q_{e_{23} e_{12}}-Q_{e_{23} e_{32}})=0\\
		\Rightarrow & ~ ( Q_{e_{23} e_{12}}-Q_{e_{23} e_{32}})=0 \\
		\Rightarrow & ~ Q_{e_{23} e_{32}}=0 
	\end{align*}
	Using $\kappa$ and combining all these, we have $q_{e_{12}e_{23}}=q_{e_{12}e_{32}}=q_{e_{23} e_{32}}=q_{e_{23}e_{12}}=q_{e_{32}e_{12}}=q_{e_{32} e_{23}}=0$. Therefore, $Q_{\tau}^{Lin} \cong (C(S^1)*C(S^1)*C(S^1)$, $\Delta)$.\\

	%%%%%%%%%%%%%%%%%%%%%%%%%%%%%%%%%%%%%%%%%%%%%%%%%%%%%%%%%%%%%%%%%%%%%%%%%%%%%%%%%%%%%%%%%%%%%%%%%%%
	
	\item[(B)] Let $ P_{2,3} $ be a graph which is the disjoint union of $P_{2}$ (a simple directed path of length 1) and $P_{3}$ (a simple directed path of length 2). Then the adjacency matrix of $ P_{2,3} $ is given by $\begin{bmatrix}
	0 & 1 & 0 & 0 & 0\\
	0 & 0 & 0 & 0 & 0\\
	0 & 0 & 0 & 1 & 0\\
	0 & 0 & 0 & 0 & 1\\
	0 & 0 & 0 & 0 & 0\\
	\end{bmatrix} $. 
	In this case, also the universal object $Q_{\tau}^{Lin}$  remains $(C(S^{1})*C(S^{1})*C(S^{1}), \Delta)$ (upto isomorphism) though the underlying graph does not belong to $\mathcal{G}$, i.e. \textbf{(R2)} is also not necessary for Theorem \ref{T1}. This is again a simple application of Lemma \ref{l1} and Lemma \ref{l2} simultaneously. 
	%%%%%%%%%%%%%%%%%%%%%%%%%%%%%%%%%%%%%%%%%%%%%%%%%%%%%%%%%%%%%%%%%%%%
	\begin{center}
		\begin{figure}[htpb] \label{P23}
			\begin{tikzpicture}
			\draw[fill=black] (0,0) circle (2pt) node[anchor=north]{$v_{1}$};
			\draw[fill=black] (2,0) circle (2pt) node[anchor=north]{$v_{2}$};
			\draw[fill=black] (4,0) circle (2pt) node[anchor=north]{$v_{3}$};
			\draw[fill=black] (6,0) circle (2pt) node[anchor=north]{$v_{4}$};
			\draw[fill=black] (8,0) circle (2pt) node[anchor=north]{$v_{5}$};
			
			\draw[black,thick] (0,0)-- node{\rmidarrow}  node[above]{$e_{12}$} (2,0);
			\draw[black,thick] (4,0)-- node{\rmidarrow}  node[above]{$e_{34}$} (6,0);
			\draw[black,thick] (6,0)-- node{\rmidarrow}  node[above]{$e_{45}$} (8,0);
			\end{tikzpicture}
			\caption{$P_{2,3}$}
		\end{figure}
	\end{center} 
	%%%%%%%%%%%%%%%%%%%%%%%%%%%%%%%%%%%%%%%%%%%%%%%%%%%%%%%%%%%%%%%%%%%%%%%%%%%%%%%%%%%%%%%%%%%%%%%%%%%%%
	
	\item[(C)] But \textbf{(R3)} is necessary for Theorem \ref{T1} because in counter example (3), we have shown that for any finite, connected, directed graph $ \Gamma $ containing at least two edges having the same source and range, universal object $Q_{\tau}^{Lin} \ncong \left(\mystar\limits_{|E(\Gamma)|} C(S^1),  \Delta\right)$. In other words, if for some graph $\Gamma$, universal object $Q_{\tau}^{Lin} \cong \left(\mystar\limits_{|E(\Gamma)|} C(S^1),  \Delta\right)$ then adjacency matrix of $\Gamma$ belongs to $M_{|V(\Gamma)|}(\{0,1\})$.  
	
\end{enumerate}

%%%%%%%%%%%%%%%%%%%%%%%%%%%%%%%%%%%%%%%%%%%%%%%%%%%%%%%%%%%%%%%%%%%%%

\section*{Acknowledgements}
The first author acknowledges the financial support from Department of Science and Technology, India (DST/INSPIRE/03/2021/001688). The second author also acknowledges the financial support from DST-FIST (File No. SR/FST/MS-I/2019/41).
We would also like to thank the anonymous referee for his/her useful comments on the older version of the paper.
\section*{Data availability}
Data sharing is not applicable to this article as no datasets were generated or analysed during the current study.

\section*{Declarations}
\textbf{Conflict of interest:} %On behalf of all authors, the corresponding author states that there is no conflict of interest.
The authors have no conflicts to disclose.\\[2cm]

\raggedright{Ujjal Karmakar} \hfill                     
{Arnab Mandal}\\
{Presidency University} \hfill
{Presidency University}\\
{College Street, Kolkata-700073}  \hfill
{College Street, Kolkata-700073}\\
{West Bengal, India}  \hfill
{West Bengal, India}\\
{Email: \email{mathsujjal@gmail.com}} \hfill
{Email: \email{arnab.maths@presiuniv.ac.in}}

\end{document}